\newtheorem{defi}{Definition}[section]
\newtheorem{satz}[defi]{Theorem}
\newtheorem{prop}[defi]{Proposition}
\newtheorem{lemma}[defi]{Lemma}
\newtheorem{bem}[defi]{Remark}
\newtheorem{folg}[defi]{Corollary}
\newcommand {\R}{\mathbb{R}} 
\newcommand {\C}{\mathbb{C}} 
\DeclareMathOperator{\vol}{vol}
\begin{document}
\title{The Brownian Motion on Harmonic Manifolds of Purely Exponential Volume Growth}

\author{Oliver Brammen}
\address{Faculty of Mathematics,
Ruhr University Bochum, 44780 Bochum, Germany}
\email{oliver.brammen@rub.de}
\thanks{The author would like to thank Kingshook Biswas for a hint at this question and his explanation of Sullivan's construction. Furthermore, the author would like to thank Gerhard Knieper and Norbert Peyerimhoff for helpful discussions and encouragement.  The author is supported by the German Research Foundation (DFG), CRC TRR 191, Symplectic structures in geometry, algebra and dynamics.}


\begin{abstract}
We show that Sullivan's method of constructing eigenfunctions of the Laplacian \cite{Sullivan1987RelatedAO}  is applicable to non-compact simply connected harmonic manifolds of purely exponential volume growth. Thereby we extend the case of pinched negative curvature to non-symmetric  Damek-Ricci spaces.
\end{abstract}


\maketitle

\section{Introduction}
 In the 1820's the English botanist Robert Brown first published on the random behaviour of pollen suspended in water. Later this behaviour was named Brownian motion and first explained by Albert Einstein \cite{einstein1905erzeugung} to originate from random collisions with water molecules. Which in turn results from the thermal movement of the molecules. 
The first rigorous mathematical description was provided by Norbert Wiener \cite{wiener1923differential}. Therefore it is also known as Wiener process. 
We can characterise the Brownian motion by whether a partial will return to every bounded region infinitely often or escape it forever. 
The first one is called recurrent the second transient. 
Sullivan in \cite{Sullivan1987RelatedAO} used the transients of the Brownian motion on negatively curved manifolds to construct eigenfunctions of the Laplacian. We will use the same method to construct eigenfunctions of the Laplacian on pre-compact regions with smooth boundaries inside a harmonic manifold of rank one.
For this purpose, we will give a construction of the Brownian motion using the heat semi-group and then proceed to adapt the arguments of Sullivan to our framework.
Note that a comprehensive proof of Sullivans result can be found in  \cite{biswas2019sullivans} and the transient behaviour of the Brownian motion on manifolds with exponential volume growth is well known. See for instance  \cite[Theorem 5.1 and Chapter 10,11]{Grigor99}. We adapted and amended these arguments for harmonic manifolds, where the main difference is that in the pinched negative curvature case, the volume of a geodesic ball of a fixed radius is uniformly bounded whereas in the harmonic case, the volume is the same. Furthermore, we require some more subtle arguments from \cite{Maheux1995}  to obtain the Poincare inequality and thereby exponential decay of the mass of the heat kernel outside geodesic ball. Apart from these modifications the proof is identical and included for the reader's convenience.  This extends the results for pinched negative curvature to Damek Ricci spaces where the curvature is only non-positive. 
This article is structured as follows: In section 2 we give a brief overview of the construction of Markov process and the properties of harmonic manifolds with a special emphasis on the heat and Green kernel. In section 3 we verify that the heat semi-group is feller and therefore generates the Brownian motion. In section 4 we then use the expected exit time of the Brownian motion from pre-compact domains to construct eigenfunctions of the Dirichlet-Laplacian on those domains. After briefly recapitulating well-known facts about the heat semi-group and its connection to the Brownian motion.

\section{Preliminaries}
\subsection{Markov Process}
Let $(M,d)$ be a locally compact, separable metric space equipped with the Borel sigma algebra $\mathcal{B}(M)$. Let $B(M)$ denote the space of bounded measurable functions equipped with the norm of uniform convergence $\lVert \cdot\rVert_\infty$  and $C_0(M)$ the space of continuous functions vanishing at infinity contained within $B(M)$. 
We begin by recalling the definition of the Brownian motion and associated concepts. 
Let $(\Sigma,\mathcal{F},\mathbb{P})$  be a probability space and $\psi\in L^1(\Sigma,\mathcal{F},\mathbb{P})$. Given any sigma algebra $\mathcal{G}\subset \mathcal{F}$ the conditional expectation for $\psi\in L^1(\Sigma,\mathcal{F},\mathbb{F})$ 
is the unique $\mathcal{G}$-measurable random variable $\mathbb{E}(\psi\mid\mathcal{G})\in  L^1(\Sigma,\mathcal{G},\mathbb{P})$ such that
\begin{align}
 \int_{A}\mathbb{E}(\psi\mid\mathcal{G})\,d\mathbb{P}=\int_{A}\psi \,d\mathbb{P}\quad \forall A\in\mathcal{G}.
 \end{align}

\begin{lemma}
$\mathbb{E}(\psi\mid\mathcal{G})$ exists and is unique.
\end{lemma}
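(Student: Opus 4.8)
The plan is to derive both assertions from the Radon--Nikodym theorem. First I would settle uniqueness: if $g_1,g_2\in L^1(\Sigma,\mathcal{G},\mathbb{P})$ both satisfy the defining identity, then $\int_A (g_1-g_2)\,d\mathbb{P}=0$ for every $A\in\mathcal{G}$. Choosing $A=\{g_1-g_2>0\}$ and $A=\{g_1-g_2<0\}$, both of which lie in $\mathcal{G}$ since $g_1,g_2$ are $\mathcal{G}$-measurable, forces $g_1=g_2$ $\mathbb{P}$-almost surely; this is exactly uniqueness in $L^1(\Sigma,\mathcal{G},\mathbb{P})$.

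For existence I would first reduce to the case $\psi\ge 0$ by writing $\psi=\psi^+-\psi^-$ and treating the two nonnegative parts separately, invoking linearity at the end. Assuming $\psi\ge 0$, define a set function $\nu$ on $(\Sigma,\mathcal{G})$ by $\nu(A)=\int_A\psi\,d\mathbb{P}$ for $A\in\mathcal{G}$. Then $\nu$ is a finite (nonnegative) measure because $\psi\in L^1$, and $\nu\ll\mathbb{P}|_{\mathcal{G}}$: if $\mathbb{P}(A)=0$ then $\int_A\psi\,d\mathbb{P}=0$. Applying the Radon--Nikodym theorem on the measure space $(\Sigma,\mathcal{G},\mathbb{P}|_{\mathcal{G}})$ produces a $\mathcal{G}$-measurable function $h\in L^1(\Sigma,\mathcal{G},\mathbb{P})$ with $\nu(A)=\int_A h\,d\mathbb{P}$ for all $A\in\mathcal{G}$, and by construction this $h$ is a version of $\mathbb{E}(\psi\mid\mathcal{G})$. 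Recombining the parts gives the general statement.

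I do not expect a genuine obstacle here. The one hypothesis to check for Radon--Nikodym is $\sigma$-finiteness of $\mathbb{P}|_{\mathcal{G}}$, which is automatic since $\mathbb{P}$ is a probability measure; the rest is the standard ``equal integrals over all $\mathcal{G}$-sets imply $\mathbb{P}$-a.e.\ equality'' lemma used already for uniqueness. As an alternative I could give the Hilbert-space argument: for $\psi\in L^2(\Sigma,\mathcal{F},\mathbb{P})$ let $\mathbb{E}(\psi\mid\mathcal{G})$ be the orthogonal projection onto the closed subspace $L^2(\Sigma,\mathcal{G},\mathbb{P})$, verify the defining identity by testing against indicators $\mathbf{1}_A$, check that the projection is order-preserving and an $L^1$-contraction, and extend to $L^1$ by density of $L^2$ in $L^1$ on a finite measure space. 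This works too, but the Radon--Nikodym route is shorter, so that is the one I would write up.
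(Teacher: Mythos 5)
Your proof is correct and follows the same route as the paper: define $\nu(A)=\int_A\psi\,d\mathbb{P}$ on $\mathcal{G}$ and apply the Radon--Nikodym theorem. You are in fact somewhat more careful than the paper's version, which omits the decomposition $\psi=\psi^+-\psi^-$ and dismisses uniqueness with ``by definition unique'' rather than the standard argument via the sets $\{g_1-g_2>0\}$ and $\{g_1-g_2<0\}$ that you give.
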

\begin{proof}
For $A\in \mathcal{G}$ let $\mu(A)=\int_A \psi d\mathbb{P}$. By the Radon-Nikodym theorem there exists a function $f:\Sigma\to X$ which is measurable with respect to $\mathcal{G}$ such that 
\begin{align*}
\mu(A)=\int_A f \,d\mathbb{P}.
\end{align*}
Hence, $f$ is the conditional expectation and is by definition unique. 
\end{proof}
 Let $y:\Sigma\to M$ be a random variable. The sigma-algebra generated by $y$ is denoted by $\sigma(y)$ 
and the conditional expectation of a random variable $\psi:\Sigma\to M$ with respect to $y$ is given by
\begin{align*}
\mathbb{E}(\psi\mid y):=\mathbb{E}(\psi\mid\sigma(y)).
\end{align*}
For $\mu$ a probability measure on $M$, a continuous Markov process on $M$ with initial distribution $\mu$ is a collection $(B_t)_{t\geq 0}$ of $M$-valued random variables on $(\Sigma,\mathcal{F},\mathbb{P})$ such that for all $f\in B(M)$ we have:
\begin{align*}
\mathbb{E}(f(B_t)\mid \mathcal{F}_s)&=\mathbb{E}(f(B_t)\mid B_s)\quad \forall t\geq s,
\end{align*}
where
\begin{align*}
\mathcal{F}_s&=\bigcup\limits_{u=0}^{s}\sigma(B_u)
\end{align*}
and 
\begin{align*}
\mathbb{P}(B_0\in E)=\mu(E)\quad \forall E\in \mathcal{B}(M).
\end{align*}
%
A sample path of the Markov process $B_t$ is a map $t\to B_t(\omega)$ for some $\omega\in \Sigma$. 


Finally let   $(T(t))_{t\geq 0}$ be  a semi-group of bounded operators on $B(M)$. A Markov process corresponds to the semi-group $(T(t))_{t \geq 0}$ if for all $f\in B(M)$ and $s,t\geq 0$ we have:
\begin{align*}
\mathbb{E}(f(B_{s+t}\mid \mathcal{F}_s))=(T(t)(f))(B_s).
\end{align*}
Such a semi-group $(T(t))_{t\geq 0}$ is called Feller-semi-group if it satisfies the following conditions:
\begin{enumerate}
\item The semi-group is contracting, that is:   $\lVert T(t)\rVert \leq 1 $ for all $t\geq 0$ where, $\lVert\cdot \rVert$ denotes the operator norm.
\item The semi-group is positive: $ T(t)(f)\geq 0 ~\forall f\geq 0$.
\item The semi-group fixes the space of bounded continuous functions on $M$ denoted by $C_0(M)$, also called Feller property, or more precisely:  For all $f\in C_0(M)$ and for all $t\geq 0$ we have  $T(t)(f)\in C_0(M)$.
\item The semi-group is strongly continuous: For all $f\in C_0(M)$ we have $\lVert T(t)f-f\rVert_{\infty}\to 0$ as $t$ approaches $0$.
\item The semi-group is conservative: $T(t)1_M=1_M$.
\end{enumerate}
The following theorem is a classical result which can be found in \cite{ethier2009markov} and guarantees the existence of general Markov processes given a Feller semi-group and an initial distribution.
\begin{satz}[\cite{ethier2009markov}]\label{thm:markov}
Let $(T(t))_{t\geq 0}$ be a Feller semi-group operating on $B(M)$ and $\mu$ a probability measure on $M$. Then there is a Markov process $(B_t)_{t\ge0}$ with initial distribution $\mu$ which corresponds to the semi-group $(T(t))_{t\geq 0}$ such that for $\mathbb{P}$-almost every $\omega \in \Sigma$ the curve $t\to B_t(\omega)$ is  right continuous and its left limit exists for all $t$.
\end{satz}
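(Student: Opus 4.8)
The plan is to follow the classical route behind the result cited from \cite{ethier2009markov}: manufacture a transition function from the semigroup, build a process with the correct finite-dimensional distributions via Kolmogorov's extension theorem, and then regularize its paths with Doob's supermartingale regularization theorem.

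\textbf{Step 1: From the semigroup to transition kernels.} For fixed $t\ge 0$ and $x\in M$, the functional $f\mapsto (T(t)f)(x)$ on $C_0(M)$ is linear, positive by condition (2), and of norm $\le 1$ by condition (1); the Riesz representation theorem expresses it as integration against a sub-probability measure $p_t(x,\cdot)$ on $(M,\mathcal{B}(M))$, and condition (5) forces $p_t(x,M)=1$. The Feller property (3) makes $x\mapsto\int_M f(y)\,p_t(x,dy)$ continuous for every $f\in C_0(M)$, whence $x\mapsto p_t(x,E)$ is Borel for every $E\in\mathcal{B}(M)$ by a monotone class argument. The semigroup law $T(s+t)=T(s)T(t)$ becomes the Chapman--Kolmogorov identity
\[
p_{s+t}(x,E)=\int_M p_s(y,E)\,p_t(x,dy).
\]

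\textbf{Step 2: Kolmogorov extension and the Markov property.} Using $\mu$ together with the kernels $p_t$, the prescriptions
\[
\mathbb{P}\big(B_{t_1}\in E_1,\dots,B_{t_n}\in E_n\big)=\int_{E_1}\!\!\cdots\!\int_{E_n}\mu(dx_0)\,p_{t_1}(x_0,dx_1)\cdots p_{t_n-t_{n-1}}(x_{n-1},dx_n),\qquad 0\le t_1<\dots<t_n,
\]
form a consistent family thanks to Chapman--Kolmogorov, so Kolmogorov's extension theorem yields a probability space $(\Sigma,\mathcal{F},\mathbb{P})$ carrying a process $(B_t)_{t\ge 0}$ with exactly these marginals and $\mathbb{P}(B_0\in E)=\mu(E)$. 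Conditioning and reusing Chapman--Kolmogorov give $\mathbb{E}(f(B_{s+t})\mid\mathcal{F}_s)=(T(t)f)(B_s)$ for $f\in B(M)$, which encodes both the Markov property relative to $\mathcal{F}_s$ and the statement that $(B_t)$ corresponds to $(T(t))$.

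\textbf{Step 3: Path regularization.} It remains to pass to a modification with right-continuous paths admitting left limits. For $\lambda>0$ set $R_\lambda f=\int_0^\infty e^{-\lambda t}T(t)f\,dt$; for $g\in C_0(M)$ with $g\ge 0$ the function $u=R_\lambda g$ is continuous, nonnegative, and satisfies $T(t)u\le e^{\lambda t}u$, so $e^{-\lambda t}u(B_t)$ is a nonnegative supermartingale with respect to $(\mathcal{F}_t)$. Because $M$ is separable and strong continuity (4) yields $\lambda R_\lambda f\to f$ uniformly as $\lambda\to\infty$, so that $R_\lambda$ has dense range in $C_0(M)$, one can choose a countable family of such $u$'s separating the points of $M$. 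Doob's regularization theorem applied simultaneously to this countable family shows that, $\mathbb{P}$-a.s., $t\mapsto B_t$ possesses left and right limits through the rationals; the resulting right limits define a c\`adl\`ag process $(\tilde B_t)$, strong continuity identifies it $\mathbb{P}$-a.s.\ with a modification of $(B_t)$, and one checks that $(\tilde B_t)$ has the same finite-dimensional distributions, hence is again a Markov process corresponding to $(T(t))$ with initial distribution $\mu$.

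The main obstacle is Step 3. The supermartingale regularization itself is routine, but one must verify that the regularized paths genuinely stay in $M$ (rather than in a compactification) and that the modification preserves the finite-dimensional laws; it is precisely here that conservativeness (5) and the Feller property (3) are indispensable, since without conservativeness the construction would only produce a process on the one-point compactification $M\cup\{\infty\}$ carrying a possible killing time.
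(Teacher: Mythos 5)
The paper does not prove this statement at all: it is quoted verbatim from Ethier--Kurtz \cite{ethier2009markov} and used as a black box, so there is no in-paper argument to compare yours against. Your sketch is the standard textbook construction (Riesz representation $\to$ transition kernels, Chapman--Kolmogorov $\to$ Kolmogorov extension, resolvent supermartingales $\to$ Doob regularization), and it is essentially sound; in particular you correctly identify the real pressure point, namely that the regularized limits must be shown to live in $M$ rather than in the one-point compactification, which is exactly where conservativeness enters. One technical remark on Step 1: the Riesz representation only identifies $T(t)$ with the kernel $p_t(x,\cdot)$ on $C_0(M)$, whereas conservativeness $T(t)1_M=1_M$ is a statement about the given extension of $T(t)$ to $B(M)$. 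Approximating $1_M$ from below by an increasing sequence in $C_0(M)$ and using positivity yields only $p_t(x,M)\le 1$; to get equality you need the extension to $B(M)$ to commute with bounded monotone limits, i.e.\ to already be a kernel operator. This is automatic in the paper's application (the heat semigroup literally acts by integration against $h_t$), but as stated for an abstract Feller semigroup on $B(M)$ it is an assumption you are silently using. Also, in Step 2 the displayed formula should integrate $x_0$ over all of $M$ against $\mu$, with the sets $E_i$ constraining $x_1,\dots,x_n$ only; as written the domains of integration are misaligned with the variables. Neither point affects the overall architecture, which is the correct one.
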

Note that the theorem gives the existence of the probability space $(\Sigma,\mathcal{F},\mathbb{P})$.

In the next step we want to define the transition probability of such a Markov process. 
Let $(\Sigma,\mathcal{F},\mathbb{P})$ be the Markov process given by  Theorem \ref{thm:markov} corresponding to the initial distribution 
$\delta_x$. Then all sample paths are and their left limit exists for all $t$. If $D_M([0,\infty))$ denotes the space of all  such paths,
then the map 
\begin{align*}
\Psi:\Sigma\to D_M([0,\infty))\\
\Sigma\ni\omega\mapsto (t\mapsto B_t(\omega)) 
\end{align*}
is defined for $\mathbb{P}$ -almost every $\omega\in \Sigma$. We obtain a probability space with sample space $D_M([0,\infty))$ by pulling back $\mathcal{B}(M)$ under the maps $\pi_t:D_M([0,\infty))\to M$ which evaluates a path $\gamma$ at $t$. Hence we get a sigma algebra $D=\bigcup_{t\geq 0} \pi_t^{-1}(\mathcal{B}(M))$ and a probability measure $\mathbb{P}_{*}=(\Psi)_{*}\mathbb{P}$. This in turn gives a probability measure $p(t,x,\cdot)$ on $M$ defined by 
\begin{align*}
p(t,x,\cdot)=(\pi_t)_{*}\mathbb{P}_{*}.
\end{align*}
 Since $\pi_t\circ \Psi=B_t$, the distribution of $B_t$ is given by $p(t,x,\cdot)$.
We call the measure $p(t,x,\cdot)$ the transition probability associated to the semi-group $(T(t))_{t\geq 0}$.
By the definition of the Markov process and the fact that $B_t$ is the Markov process associated to $(T(t))_{t\geq 0}$ with initial distribution being the delta distribution at $x\in M$ we get:
\begin{align*}
\int_{\Sigma}\mathbb{E}(f(B_t)\mid \mathcal{F}_0)\,d\mathbb{P}=\int_\Sigma(T(t)f)(B_0)\,d\mathbb{P}.
\end{align*}
Since $B_t$ has the delta distribution as initial distribution:
\begin{align*}
 \int_{\Sigma}f(B_t)\,d\mathbb{P}=T(t)f(x).
 \end{align*}
By  $\pi_t\circ \Psi=B_t$ and the discussion above we finally obtain:
\begin{align*}
\int_M f(y)\,d p(t,x,y)=(T(t)f(x)).
\end{align*}
The theorem below is again a classical result and can be found in \cite{ethier2009markov}.
\begin{satz}[\cite{ethier2009markov}]\label{thm:conpath}
If for every $x\in M$ and $\epsilon>0$ we have
$$\lim\limits_{t\to 0} \frac{1}{t} p(t,x,M\setminus B(x,\epsilon))=0,$$
where $B(x,\epsilon)$ is the metric ball of radius $\epsilon$ around $x$,
then almost all sample paths are continuous. 
\end{satz}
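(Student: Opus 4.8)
The plan is to combine the path regularity already supplied by Theorem~\ref{thm:markov} — right-continuity together with existence of left limits — with a maximal inequality of Kinney--Dynkin type in order to rule out jumps. Fix $x\in M$ and write $\mathbb{P}_x$ for the law of the process with initial distribution $\delta_x$; this is the situation relevant to the statement. Since a right-continuous path with left limits is continuous if and only if it has no jump, and a jump at a time $u>0$ means $d(B_{u-},B_u)>0$, the path fails to be continuous if and only if there exist $T,m\in\mathbb{N}$ and $u\in(0,T]$ with $d(B_{u-},B_u)\ge 1/m$. This is a countable union of events, so it suffices to fix $T>0$ and $\epsilon>0$ and to show that, $\mathbb{P}_x$-almost surely, there is no $u\in(0,T]$ with $d(B_{u-},B_u)\ge\epsilon$.

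First I would establish the estimate
\begin{align*}
\mathbb{P}_x\Big(\sup_{0\le s\le t} d(B_s,x)\ge\epsilon\Big)\le\frac{A(t,\epsilon/2)}{1-A(t,\epsilon/2)},\qquad A(t,\delta):=\sup_{y}\ \sup_{0\le s\le t} p\big(s,y,M\setminus B(y,\delta)\big),
\end{align*}
valid for $t$ small enough that $A(t,\epsilon/2)<1$. To prove it, introduce the first exit time $\tau=\inf\{s: d(B_s,x)\ge\epsilon\}$, which is a stopping time; by right-continuity one has $d(B_\tau,x)\ge\epsilon$ on $\{\tau<\infty\}$. On $\{\tau\le t\}$, either $d(B_t,x)\ge\epsilon/2$, or else $d(B_t,B_\tau)\ge d(B_\tau,x)-d(B_t,x)\ge\epsilon/2$. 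The first alternative has probability at most $p(t,x,M\setminus B(x,\epsilon/2))\le A(t,\epsilon/2)$, and the strong Markov property at $\tau$ (available since the semi-group is Feller and the paths are right-continuous with left limits) bounds the second by $\mathbb{E}_x\big[1_{\{\tau\le t\}}\,p(t-\tau,B_\tau,M\setminus B(B_\tau,\epsilon/2))\big]\le A(t,\epsilon/2)\,\mathbb{P}_x(\tau\le t)$. Combining and solving for $\mathbb{P}_x(\tau\le t)=\mathbb{P}_x(\sup_{s\le t}d(B_s,x)\ge\epsilon)$ gives the claim. (If one wishes to avoid the strong Markov property, the same bound follows from the ordinary Markov property applied along a finite grid of times together with right-continuity of $s\mapsto d(B_s,x)$.)

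With the estimate in hand, partition $[0,T]$ into $n$ closed subintervals $I_k=[kT/n,(k+1)T/n]$. Applying the Markov property at the left endpoints and the maximal inequality on each piece, then summing, gives
\begin{align*}
\mathbb{P}_x\Big(\max_{0\le k\le n-1}\ \sup_{s\in I_k} d(B_s,B_{kT/n})\ge\epsilon\Big)\le n\cdot\frac{A(T/n,\epsilon/2)}{1-A(T/n,\epsilon/2)}=T\cdot\frac{A(T/n,\epsilon/2)/(T/n)}{1-A(T/n,\epsilon/2)}.
\end{align*}
If a sample path has a jump $d(B_{u-},B_u)\ge 2\epsilon$ at some $u\in(0,T]$, then $u$ lies in some $I_k$, and since $B_{u-}$ is a limit of values $B_s$ with $s$ in $I_k$ (or, in the borderline case $u=kT/n$, in $I_{k-1}$), the triangle inequality forces $\sup_{s\in I_j}d(B_s,B_{jT/n})\ge\epsilon$ for that $j$. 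Hence the event ``some jump of size $\ge 2\epsilon$ occurs in $(0,T]$'' is contained, for every $n$, in the event estimated above; letting $n\to\infty$ and invoking the hypothesis $\lim_{t\to 0}A(t,\delta)/t=0$, the right-hand side tends to $0$, so that event is null. Taking the countable union over $T\in\mathbb{N}$ and $\epsilon=1/(2m)$, $m\in\mathbb{N}$, we conclude that $\mathbb{P}_x$-almost every path has no jump, i.e.\ is continuous.

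The main obstacle is the passage from the pointwise hypothesis $\lim_{t\to 0}\frac1t p(t,x,M\setminus B(x,\epsilon))=0$ to the locally uniform control $\lim_{t\to 0}A(t,\delta)/t=0$ effectively used above, since $A(t,\delta)$ a priori involves a supremum over all of $M$. This is where local compactness enters, and it is the technical heart of the argument in \cite{ethier2009markov}: one first confines the path started at $x$ to a fixed compact neighbourhood on all of $[0,T]$ with probability arbitrarily close to $1$ — a step that can be bootstrapped from the hypothesis itself — so that only the values $p(s,y,\cdot)$ with $y$ ranging over a compact set are relevant in the partition argument, and then upgrades pointwise decay to uniform decay on that compact set using the Feller property (continuity of $y\mapsto T(s)f(y)$) together with a Dini-type argument. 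In the situations considered in this paper the transition probability is the heat kernel of a harmonic manifold, where the required uniformity follows at once from the relevant heat kernel estimates, so this point causes no additional difficulty there.
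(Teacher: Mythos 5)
The paper does not prove this theorem at all: it is imported verbatim from \cite{ethier2009markov} as a classical fact, so there is no in-text argument to compare yours against. On its own terms, your proof follows the standard Kinney--Dynkin oscillation argument, and the skeleton is sound: the reduction to ``no jumps'' via the c\`adl\`ag regularity from Theorem~\ref{thm:markov}, the maximal inequality $\mathbb{P}_x(\sup_{s\le t}d(B_s,x)\ge\epsilon)\le A/(1-A)$ via the (strong) Markov property at the exit time, the partition of $[0,T]$ with the bound $n\cdot A(T/n,\epsilon/2)/(1-A(T/n,\epsilon/2))$, and the observation that a jump of size $2\epsilon$ forces an oscillation of size $\epsilon$ over some subinterval are all correct.

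The genuine gap is the one you flag yourself and then only gesture at: the entire quantitative input of the argument is $n\,A(T/n,\delta)\to 0$, i.e.\ $\lim_{t\to0}\frac1t\sup_y\sup_{s\le t}p(s,y,M\setminus B(y,\delta))=0$ at least for $y$ in a compact set containing the path, whereas the hypothesis is pointwise in $x$ and taken only along $t\to0$ rather than as a running supremum over $s\le t$. Confining the path to a compact set is actually free (the closure of the range of a c\`adl\`ag path on $[0,T]$ is compact, so no bootstrapping from the hypothesis is needed), but upgrading the pointwise limit to a locally uniform one is not a routine Dini argument -- $x\mapsto p(t,x,M\setminus B(x,\delta))$ is not obviously continuous and $t\mapsto\frac1t p(t,x,\cdot)$ is not monotone -- and without it the sum over the $n$ subintervals does not close. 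So as written the proof establishes the theorem only under the locally uniform version of the hypothesis. For the purposes of this paper that is harmless: in the ``Continuous paths'' step of Theorem~\ref{thm:feller} the bound of Lemma~\ref{expdecay} is uniform in $x$ (the constants depend only on $t$, by harmonicity), so it is the uniform condition that is actually verified there. But if you want a self-contained proof of the theorem as stated, the pointwise-to-uniform step is the part you must actually write out (or you should restate the hypothesis in its locally uniform form, which is how the criterion is usually formulated).
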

Hence, under the condition of Theorem \ref{thm:conpath} we obtain for each $x\in M$ a probability measure $\mathbb{P}_x$ on the space of all continuous paths $\gamma:[0,\infty)\to M$ denoted by $C_M$ and a sigma algebra $\mathcal{C}$ generated by the map
 \begin{align*}
\pi_t: C_M&\to M\\
\gamma&\mapsto \gamma(t).
\end{align*}
This consideration allows us to focus on the space $(C_M,\mathcal{C},\mathbb{P}_x)$ and disregard the abstract probability space provided by Theorem \ref{thm:markov}. 
\subsection{Harmonic Manifolds of Purely Exponential\\ Volume Growth}
In this section, we give a brief introduction to non-compact simply connected harmonic manifolds.
 For more information 
 we refer the reader to the surveys \cite{kreyssig2010introduction} and \cite{knieper22016}.
 Let $(X,g)$ be a non-compact simply connected Riemannian manifold without conjugate points. Denote by $C^k(X)$ the space of $k$-times differentiable functions on $X$ and by $C^k_c(X)\subset C^k(X)$ those with compact support. With the usual conventions for smooth and analytic functions. We equipp thease spaces with the usual topologies see  \cite[Chapter II Section 2]{helgason2000groups}.
 Furthermore, for $p\geq 1$ $L^p(X)$ refers to the $L^p$-space of $X$ with regards to the measure induced by the metric and integration over a manifold is always interpreted as integration with respect to the canonical measure on this manifold unless stated otherwise.
   For $p\in X$ and $v\in S_pM$ denote by $c_v :\R\to M$ the unique unit speed geodesic with $c(0)=p$ and $\dot{c}(0)=v$. Define $A_v$ to be the Jacobi tensor along $c_v$ with initial conditions $A_v(0)=0$ and $A^{\prime}(0)=\operatorname{id}$. For details on Jacobi tensors see \cite{KNIEPER2002453}. Then using the transformation formula and the Gauss lemma the volume of the sphere of radius $r$ around $p$ is given by:
\begin{align}\label{eq:volS}
\operatorname{vol}S(p,r)=\int_{S_p M}\operatorname{det} A_v(r)\,dv.
\end{align}
The second fundamental form of $S(p,r)$ is given by $A_v^{\prime}(r)A^{-1}_v(r)$ and the mean curvature by 
\begin{align}\label{eq:meancurv}
\nu_p(r,v)=\operatorname{trace}A_v^{\prime}(r)A_v^{-1}(r).
\end{align}

\begin{defi}
Let $(X,g)$ be a complete non-compact simply connected manifold without conjugate points and $SX$ its unit tangent bundle. For $v\in SX$ let $A_v(t)$ be the Jacobi tensor with initial conditions $A_v(0)=0$ and $A^{\prime}_v(0)=\operatorname{id}$. Then $X$ is said to be harmonic if and only if
$$A(r)=\operatorname{det}(A_v(r))\quad\forall v\in SX.$$
Hence the volume growth of a geodesic ball centred at $\pi(v)$ only depends on its radius.
\end{defi}
From  (\ref{eq:meancurv}) one easily concludes that the definition above is equivalent to the mean curvature of geodesic spheres only depending on the radius. More precisely the mean curvature of a geodesic sphere $S(x,r)$  of radius $r$ around a point $x\in X$ is given by $\frac{A'(r)}{A(r)}$. 

Using $A_v$ one can construct the Jacobi tensor $S_{v,r}$ along $c_v$  with $S_{v,r}(0)=\operatorname{id}$, $S_{v,r}(r)=0$, and $U_{v,r}=S_{v,-r}$.
Then the stable respectively unstable Jacobi tensor is obtained via the limiting process: 
\begin{align*}
S_v&=\lim\limits_{r\to\infty}S_{v,r}\\
U_v&=\lim\limits_{r\to \infty} U_{v,r}.
\end{align*}
Note that these limits exist \cite{KNIEPER2002453}.
Let $v\in S_p X$ and $c_v$ be the unit speed geodesic with initial direction $v$.
Now define for $x\in X$ the Busemann function $b_v(x)=\lim_{t\to \infty}b_{v,t}(x)$, where $b_{t,v}(x)=d(c_v(t),x)-t$.
This limit exists and is a $C^{1,1}$ function on $X$, see for instance \cite{rub.181283719860101}. 
The level sets of the Busemann functions, $H^s_{v}:=b^{-1}_v(s)$ are called horospheres and in the case that $b_v\in C^2(X)$ their second fundamental form in $\pi(v)=p$ is given by $U_v^{\prime}(0)=:U(v)$. Hence their mean curvature is given by the trace of $U(v)$. In the case of a harmonic manifold $v\to \operatorname{trace}U(v)$ is independent of $v\in SX$, hence the mean curvature of horospheres is constant. 
Using this notion of stable and unstable Jacobi tensors Knieper in \cite{knieper2009new} generalised the well-known notion of rank for general spaces of non-positive curvature introduced by Ballmann, Brin and Eberlein \cite{Ballmann1985} 
 to manifolds without conjugated points. 
 Define for $v\in SX$  $S(v):=S'_v(0)$ and $D(v)=U(v)-S(v)$. Then:
\begin{align*}
\mathcal{L}(v)&:=\operatorname{Kern}(D(v))\\
\operatorname{rank}(v)&:=\operatorname{dim}\mathcal{L}(v)+1\\
\operatorname{rank}(X)&:=\operatorname{min}\{\operatorname{rank}(v)\mid v\in SX\}.
\end{align*}
Furthermore, Knieper showed that  for a non-compact harmonic manifold the following are equivalent: 
\begin{itemize}
\item $X$ has purely exponential volume growth meaning: 
 exists some constant $C\geq1$ and $\rho>0$ such that:
\begin{align*}
\frac{1}{C}\leq \frac{A(r)}{e^{2\rho r}}\leq C.
\end{align*}
\item the geodesic flow is Anosov with respect to the Sasaki metric
\item Gromov Hyperbolicity 
\item Rank one.
\end{itemize}

\subsubsection{Heat Kernel}
The notion of harmonicity is Furthermore equivalent to the existence of radial solutions of the Laplace equation on a punctured neighbourhood around each point in the manifolds. Recall that 
for $f\in C^2(X)$ the Laplace-Beltrami operator is defined by
\begin{align*}
\Delta f:=\operatorname{div}\operatorname{grad} f
\end{align*}
 $\Delta$ is by definition linear on $C^{\infty}_c(X)$ and we have 
\begin{align}\label{eq:deltagrad}
\int_X -\Delta f(x)\cdot f(x) \,dx=\int_X\lVert \nabla f(x)\rVert_g^2 \,dx\quad\forall f\in C^{\infty}_c(X)
\end{align}
where $\lVert\cdot\rVert_g$ is the norm induced by $g$. 
Hence $-\Delta$ is a non-negative symmetric operator. Furthermore $-\Delta$ is formally self adjoint hence by the density of $C^{\infty}_c(X)$ in $L^2(X).$ 
Therefore $\Delta$ extents to a densely defined self-adjoint operator on $L^2(M)$ with domain $D(\Delta)$. By abuse of notation, this extension will again be denoted by $\Delta$. The above also implies that the essential spectrum of  $\Delta$ is contained in the negative half line and all eigenfunctions are analytic since harmonic manifolds are Einstein and therefore analytic in normal coordinates. 
Furthermore, for $t>0$ the Laplacian $\Delta$ generates a semi-group $e^{t\Delta}$ on $L^2(X)$, defined by the spectral theorem for unbounded self-adjoint operators, which is analytical and strongly continuous.  $e^{t\Delta}$ leaves the set $L^1(X)\cap L^{\infty}(X)\subset L^2(X)$ invariant and is 
positive and contracting on $L^{\infty}(X)$, therefore extends to a positive, contracting and strongly continuous semi-group on $L^p(X)$ for $1\leq p< \infty$. By abuse of notation, it will be denoted by $e^{t\Delta}$ and is called the heat semi-group. This abuse of notation is justified by the fact that the heat semi-group is consistent: for $f\in C^{\infty}_c(X)$ the function $u:X\times\R_{\geq 0}\to\C$ given by $u(x,t)=e^{t\Delta}f(x)$ is the unique solution of the heat equation:
\begin{align*}
u:X&\times\R_{\geq 0}\to\C\\
\frac{\partial}{\partial t} u&=\Delta u,\\
u(x,0)&=f(x).
\end{align*}
The integral kernel of this action is called the heat kernel. In \cite{szabo1990} the author showed that $(X,g)$ being harmonic is equivalent to the heat kernel being a radial function and the action of the heat semi-group is defined by the convolution with the heat kernel
\begin{align*}
h_{t}:X\times X&\to \R\\
\frac{\partial}{\partial t} h_{t}&=\Delta h_{t},\\
\lim_{t\to 0} h_{t}(x,y)&=\delta_{x}(y).
\end{align*}
Where the convolution on harmonic manifolds is defined in the following way:
Let $g:X\to \C$ be a radial function  around $\sigma\in X$, with $g=u\circ d_\sigma$ for some function $u:\R_{\geq 0}\to \C$. 
 Then the convolution of a function $f:X\to \C$ with compact support with $g$
   is given by $$ (f*g)(x):=\int_{X} f(y)\cdot\tau_x g(y)\,dy.$$
Now we need to check that the integral above is defined for almost every $x\in X$, if $g$ and $f$ are in $L^1(X)$.  For that purpose let $g=u\circ d_{\sigma}$. Then we have:
\begin{align*}
\lVert f*g\rVert_1&\leq \int_X\int_X\lvert f(y)\rvert\lvert(\tau_xg)(y)\rvert \,dy\,dx\\
&=\omega_{n-1}\int_X\lvert f(y)\rvert\Bigl(\int_{0}^{\infty}\lvert u(r)\rvert A(r) \,dr\Bigr) \,dy\\
&=\lVert f\rVert_1\lVert g\rVert_1< \infty. 
\end{align*}
Moreover, by replacing $f\in L^1(X)$ by $f\in L^{\infty}(X)$ in the calculation above we obtain:
\begin{align}\label{eq:conalg}
\lVert f*g\rVert_{\infty}\leq \lVert f\rVert_{\infty}\cdot \lVert g\rVert_1.
\end{align}
Hence, by the Riesz-Thorin theorem (see for instance\cite{bennett1988interpolation}), we get for 
 for all $ p\in[1,\infty]$ and $f\in L^p(X),$
\begin{align*}
\lVert f*g\rVert_p\leq \lVert f\rVert_{p}\cdot\lVert g\rVert_1.
\end{align*}
Since $h_{t}\in L^{1}(X)$ and radial we obtain that for $f\in L^{p}(X)$ 
$$ e^{\Delta t}f(y)=(f*h_{t})(y)=\int_{X}f(x)\cdot h_{t}(x,y)\,dx.$$
\subsubsection{Green Kernel}
The function 
\begin{align*}
G:X\times X\setminus \{(x,x)\mid x\in X\} \to \R
\end{align*}
 given by 
 \begin{align}
G(x,y)=\int_{0}^{\infty}h_t(x,y)dt
\end{align}
is called the Green kernel.
$G$ is defined uniquely by the following properties:
\begin{enumerate}
\item $\Delta_x G(x,y)=0\quad \forall x\neq y$,
\item $G(x,y)\geq 0\quad \forall x\neq y$,
\item For all $y\in X$ we have $\inf_{x\in X,x\neq y}G(x,y)=0$,
\item $\int_X G(x,y)\Delta \psi(y)\,dy=-\psi(x)\quad \forall \psi\in C_0^{\infty}(X)$.
\end{enumerate}
 Since the heat kernel is a radial function, so is the Green kernel. It was shown in 
\cite{knieper2013noncompact2}  that the Green kernel can be represented using the density function $A(r)$ by:
 \begin{align}\label{green}
 G(x,y)=\frac{1}{\vol(S_yX)}\int_{d(x,y)}^{\infty}\frac{1}{A(r)}\,dr.
 \end{align}


 From now on assume that $(X,g)$ is a non-compact simply connected harmonic manifold with mean curvature of the horosphere $h=2\rho$. In this case, the authors showed in \cite{BusemannHarmonic} that $\Delta b_v=h$ and hence the Busemann functions as well as all eigenfunctions of $\Delta$ are analytic by elliptic regularity since harmonic manifolds are Einstein, see for instance  \cite[Sec. 6.8]{willmore1996riemannian}, and therefore analytic by the Kazdan-De Truck theorem \cite{ASENS_1981_4_14_3_249_0}. Furthermore, the authors in \cite[Corollary 5.2]{PS15} showed that the top of the spectrum of $\Delta$ is given by $-\rho^2$.
 Additionally, we will also from now on assume that $X$ is of purely exponential volume growth, in this case, the constant in the exponent coincides with $\rho$.

\section{Feller Property of The Heat semi-group on Harmonic Manifolds of Purly Exponential Volume Growth}
In this section, we are going to show that the heat semi-group on a rank one harmonic manifold is Feller. Let us start with three essential lemmas. We are again going to assume that $(X,g)$ is a non-compact simply connected harmonic manifold of rank one with dimension bigger than 2. 
By \cite[Thm 8.5]{chavel1984eigenvalues} we have the following lemma:
\begin{lemma}\label{complete}
$X$ is stochastic complete, that is $\Vert h_t\Vert_{1}=1$.
\end{lemma}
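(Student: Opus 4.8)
The plan is to deduce the lemma from a standard sufficient condition for stochastic completeness, checking that the hypotheses already recorded in the preliminaries place $X$ well inside the conservative regime. I see two routes. The first is the one matching the cited Theorem~8.5 of \cite{chavel1984eigenvalues}: a complete Riemannian manifold whose Ricci curvature is bounded below by a constant is stochastically complete, i.e.\ $e^{t\Delta}1_X=1_X$, equivalently $\Vert h_t\Vert_{1}=1$ for all $t>0$. The second is Grigor'yan's volume criterion: if $\int_1^\infty \frac{r\,dr}{\log\vol B(x,r)}=\infty$ for one (hence every) $x\in X$, then $X$ is stochastically complete.

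Via the first route I would argue as follows. A harmonic manifold is Einstein --- a fact already invoked above for the analyticity of eigenfunctions and of Busemann functions --- so $\operatorname{Ric}=c\,g$ for a constant $c\in\R$; in particular the Ricci curvature is globally bounded, a fortiori bounded below, and since $X$ is complete the cited theorem applies verbatim and gives $\Vert h_t\Vert_{1}=1$.

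Via the second route, which keeps the rank-one hypothesis visible, I would combine \eqref{eq:volS} with the definition of harmonicity to get $\vol S(p,r)=\omega_{n-1}A(r)$, and then use purely exponential volume growth to fix $C\ge 1$ with $A(r)\le Ce^{2\rho r}$ for all $r\ge 0$. Integrating in $r$ yields $\vol B(p,R)=\int_0^R \vol S(p,r)\,dr\le \frac{\omega_{n-1}C}{2\rho}e^{2\rho R}$, so $\log\vol B(p,R)\le 2\rho R+c_0$ for some constant $c_0$, whence $\int_1^\infty \frac{R\,dR}{\log\vol B(p,R)}\ge \int_1^\infty \frac{R\,dR}{2\rho R+c_0}=\infty$, and Grigor'yan's criterion again gives $\Vert h_t\Vert_{1}=1$.

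I do not expect a genuine obstacle here: both criteria are classical, and the content of the lemma is simply that harmonicity (Einstein, hence Ricci bounded) --- or, for us equivalently, purely exponential volume growth, which is far below the $e^{Cr^2}$ threshold --- forces conservativeness. The only point deserving a line of care is the normalization: one always has $\Vert h_t\Vert_{1}=\int_X h_t(p,y)\,dy\le 1$ because the heat semi-group is $L^1$-contracting, and \emph{stochastic complete} is precisely the saturation of this inequality, i.e.\ the conservativeness property $T(t)1_M=1_M$ among the Feller axioms; so the lemma is exactly the statement that this particular Feller axiom holds on $X$.
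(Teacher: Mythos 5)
Your first route is exactly the paper's argument: the lemma is stated with no proof beyond the citation of \cite[Thm 8.5]{chavel1984eigenvalues}, i.e.\ completeness plus the Einstein (hence Ricci-bounded-below) property of harmonic manifolds gives stochastic completeness, so your proposal is correct and matches the paper. Your second route via Grigor'yan's volume test is a sound alternative that uses only the purely exponential volume growth, but it is not what the paper does.
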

The second lemma is an adaptation of the more general case found in \cite[Proposition1.1]{grigor1994heat} which can be applied since the Ricci curvature of $(X,g)$ is bounded from below and all balls of the same radius have the same volume. 
To see this we need the following theorem from \cite{Maheux1995}:
\begin{satz}[\cite{Maheux1995}]
Let $(M,g)$ be a complete connected Riemannian manifold of dimension $n$, such that the Ricci curvature is bounded from below by $-a^2g$ for $a\geq 0$. Then there exists a constant $c_n$, only depended on $n$ and the tuple $(p,q)$ satisfying $1\leq p<2$ and $p \leq q \leq \frac{pn}{(n-p)}$, and a constant $C_{p,q}$ such that for every $x\in M$ and $r>0$ we have for $f$ a smooth function on the ball $B(x,r)$:
\begin{align*}
C_{p,q}e^{c_n(1+ar)}r\vol(B(x,r))^{1/q-1/p}\Big(\int_{B(x,r)}\lvert\operatorname{grad} f(y)\rvert^p\,dy\Big)^{1/p}\\\geq\Big(\int_{B(x,r)}(f(y)-\overline{f}(x))^q\,dy\Big)^{1/p},
\end{align*}
where $\overline{f}=\frac{1}{\vol(B(x,r))}\int_{B(x,r)}f(y)\,dy.$
\end{satz}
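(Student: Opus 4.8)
The plan is to obtain this whole family of Sobolev--Poincar\'e inequalities on a fixed ball $B=B(x,r)$ by assembling two known pieces --- a scale-invariant $L^1$-Poincar\'e inequality and the self-improvement machinery for Poincar\'e inequalities on doubling metric measure spaces --- and then carefully tracking the dependence of the constants on $ar$. Both pieces are fed by two consequences of Bishop--Gromov volume comparison for $\operatorname{Ric}\geq -a^2g$: the measure is doubling on sub-balls of $B(x,2r)$ with constant $e^{c_n(1+ar)}$, and it satisfies the relative volume lower bound with the \emph{sharp} exponent $n$,
\begin{align*}
\vol B(x,s)\;\geq\;e^{-c_n(1+ar)}\Bigl(\frac{s}{r}\Bigr)^{n}\vol B(x,r)\qquad(0<s\leq r),
\end{align*}
which follows from the monotonicity of $s\mapsto \vol B(x,s)/V_{-a^2}(s)$ --- where $V_{-a^2}(s)$ is the volume of an $s$-ball in the model space of constant curvature $-a^2$ --- together with the elementary bounds $\tau\leq \sinh(a\tau)\leq \tau e^{a\tau}$.

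First I would record the $L^1$-Poincar\'e inequality on $B$,
\begin{align*}
\frac{1}{\vol B}\int_{B}\bigl|f-\overline{f}(x)\bigr|\,dy\;\leq\;C_n\,e^{c_n(1+ar)}\,r\;\frac{1}{\vol B}\int_{B}|\grad f|\,dy,
\end{align*}
which under a lower Ricci bound is classical: one derives it from the segment inequality of Cheeger--Colding (itself a Fubini-over-geodesics argument controlled by Bishop--Gromov) to pass from $B$ to a concentric dilate, and then uses a Jerison-type chaining argument, relying on the doubling property, to return to $B$ itself; alternatively one may quote Buser's inequality with its explicit $ar$-dependence.

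Next I would feed this into the self-improvement theorem for Sobolev--Poincar\'e inequalities on metric measure spaces (in the style of Haj\l{}asz--Koskela and Saloff-Coste): a doubling measure enjoying the relative lower bound of exponent $n$ above, together with the $L^1$-Poincar\'e inequality, produces the endpoint inequality at $p=1$, $q=\tfrac{n}{n-1}$; applying this to suitably truncated powers $|f-\overline{f}(x)|^{\gamma}$ with $\gamma=\tfrac{p(n-1)}{n-p}$ and invoking H\"older's inequality upgrades it to the sharp pair $\bigl(p,\tfrac{pn}{n-p}\bigr)$ for every $1\leq p<n$, and one final use of H\"older's inequality on the finite-measure ball $B$ interpolates down to the whole range $p\leq q\leq \tfrac{pn}{n-p}$, giving
\begin{align*}
\Bigl(\frac{1}{\vol B}\int_{B}\bigl|f-\overline{f}(x)\bigr|^{q}\,dy\Bigr)^{1/q}\;\leq\;C_{p,q}\,e^{c_n(1+ar)}\,r\;\Bigl(\frac{1}{\vol B}\int_{B}|\grad f|^{p}\,dy\Bigr)^{1/p}.
\end{align*}
Clearing the factors $\tfrac{1}{\vol B}$ turns the left side into $\vol(B)^{-1/q}\lVert f-\overline{f}(x)\rVert_{q}$ and the right side into $\vol(B)^{-1/p}\lVert\grad f\rVert_{p}$, and rearranging yields exactly the prefactor $\vol(B(x,r))^{1/q-1/p}$; absorbing the finitely many dimensional constants into $C_{p,q}$ recovers the stated inequality. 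The hypothesis $p<2$ plays no role beyond ensuring $p<n$ when $n\geq 3$, so it is presumably only the range needed later.

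The step I expect to be the main obstacle is not any single one of these but the uniform bookkeeping of the constant: one must verify that the nested invocations of Bishop--Gromov --- inside the segment inequality, inside the Jerison chaining, and inside the truncation iteration of the self-improvement theorem --- combine to a constant that is exponential and affine in $r$, that is of the form $e^{c_n(1+ar)}$, rather than degenerating to $e^{c_n(1+ar)^2}$ or $e^{c_nar^2}$; this is precisely why one works on balls, where the doubling constant deteriorates only like $e^{c_nar}$, instead of attempting a global statement. A secondary subtlety is to feed the relative volume bound into the self-improvement step with the sharp exponent $n=\dim M$, so that the Sobolev target exponent is $pn/(n-p)$ and not the a priori worse exponent dictated by the logarithm of the doubling constant.
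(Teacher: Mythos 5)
This statement carries no proof in the paper at all: it is quoted (up to a typo --- the exponent on the right-hand side should surely be $1/q$, not $1/p$) directly from \cite{Maheux1995}, so there is no internal argument to compare yours against; the only honest comparison is between your sketch and the literature. As a reconstruction, your roadmap is the standard modern route and is sound in outline: Bishop--Gromov comparison under $\operatorname{Ric}\geq -a^2g$ does give both the doubling constant $e^{c_n(1+ar)}$ on sub-balls and the relative volume lower bound with the sharp exponent $n$; Buser's $L^1$-Poincar\'e inequality (or its segment-inequality derivation) supplies the $(1,1)$ endpoint with the correct $e^{c_n(1+ar)}r$ prefactor; and the Haj\l{}asz--Koskela self-improvement combined with Maz'ya-type truncation and H\"older's inequality upgrades this to the full range $p\leq q\leq pn/(n-p)$, after which clearing the averaged normalizations produces exactly the factor $\vol(B(x,r))^{1/q-1/p}$. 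Two caveats. First, every substantive step is outsourced to a quoted theorem, and the one step you yourself flag as delicate --- that the Jerison chaining and the truncation iteration multiply together only a dimension-bounded number of powers of the doubling constant, so that the final constant remains of the form $e^{c_n(1+ar)}$ rather than degrading to something like $e^{c_n(1+ar)^2}$ --- is precisely what would need to be written out; it does hold, because the length of a Jerison chain and the number of truncation levels are controlled by a fixed dimensional power of the doubling constant, but as written this is an assertion rather than a proof. Second, the argument in \cite{Maheux1995} itself predates the Cheeger--Colding and Haj\l{}asz--Koskela machinery and proceeds by a more direct covering/iteration scheme in the sub-elliptic setting, so your derivation is a genuinely different (and more modern) route: it buys conceptual transparency and a clean separation of the geometric input from the functional-analytic self-improvement, at the price of invoking heavier black boxes.
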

\begin{lemma}\label{lemma:heateastimate4}
Let $X$ be a non-compact simply connected harmonic manifold of rank one and $\lambda_1=-\rho^2$. Then for the heat kernel $h_t(x,y)$ there exist $C>0$ and $D>4$ such that off the diagonal in $X\times X$  for $t>0$ we have:
$$h_t(x,y)\leq \frac{C}{\min\{1,t^{n/2}\}}\exp\Bigl(\lambda_1t-\frac{d(x,y)^2}{Dt}\Bigr).$$
Furthermore there is a constant $K>0$ such that for all $x,y$ off the diagonal in $X\times X$  for $t>1$ we have:
$$h_t(x,y)\leq K\Bigl (1+\frac{d(x,y)^2}{t}\Bigr)^{1+\frac{1}{n}}\exp\Bigl(\lambda_1t-\frac{d(x,y)^2}{4t}\Bigr).$$
\end{lemma}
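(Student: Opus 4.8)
The plan is to prove both bounds by adapting \cite[Proposition 1.1]{grigor1994heat}: one first establishes an on-diagonal upper bound for $h_t(x,x)$ that captures simultaneously the Euclidean short-time rate $t^{-n/2}$ and the long-time exponential decay $e^{\lambda_1 t}$, and then upgrades it to an off-diagonal Gaussian estimate by an exponential-weight argument. Two structural features of $X$ are used throughout: since a harmonic manifold is Einstein, $\operatorname{Ric}=c\,g$ for a constant $c$, hence $\operatorname{Ric}\geq -a^2 g$ for some $a\geq 0$; and by harmonicity every geodesic ball $B(x,r)$ has the same volume $V(r)$, with $V(r)\geq c_0 r^n$ for $0<r\leq 1$ and $V(r)\asymp e^{2\rho r}$ for $r\geq 1$ by purely exponential volume growth. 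The first of these makes the theorem of Maheux quoted above applicable; the second makes its constants independent of the base point.

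\emph{Step 1: the on-diagonal bound.} For $0<t\leq 1$ I would apply Maheux's theorem on balls of the fixed radius $r=1$ to obtain a Sobolev--Poincar\'e, hence a local Nash-type, inequality on unit balls with a constant independent of the centre (using $V(1)$-homogeneity); a Moser iteration together with $V(\sqrt t)\geq c_0 t^{n/2}$ then gives $h_t(x,x)\leq C t^{-n/2}$ for $0<t\leq 1$. For $t\geq 1$ I would instead use that, by \cite[Corollary 5.2]{PS15}, the bottom of the $L^2$-spectrum of $-\Delta$ equals $\rho^2=-\lambda_1$, so $\lVert e^{t\Delta}\rVert_{L^2\to L^2}\leq e^{\lambda_1 t}$; writing $h_t(x,x)=\lVert h_{t/2}(x,\cdot)\rVert_2^2$ with $h_{t/2}(x,\cdot)=e^{(t-1)\Delta/2}h_{1/2}(x,\cdot)$ and bounding $\lVert h_{1/2}(x,\cdot)\rVert_2^2=h_1(x,x)$ by the case $t=1$ of the short-time estimate yields $h_t(x,x)\leq Ce^{\lambda_1 t}$ for $t\geq 2$, while the range $1\leq t\leq 2$ follows since $t\mapsto h_t(x,x)$ is non-increasing. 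Combining the two regimes,
\begin{align*}
h_t(x,x)\leq \frac{C}{\min\{1,t^{n/2}\}}\,e^{\lambda_1 t}=:\frac{1}{\gamma(t)},\qquad t>0 .
\end{align*}
The decisive point is that $\gamma(t)=C^{-1}\min\{1,t^{n/2}\}\,e^{\rho^2 t}$ is a \emph{positive increasing} function of $t$; this is where $\lambda_1<0$ enters, and it is what permits the weight argument below to retain the factor $e^{\lambda_1 t}$.

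\emph{Step 2: the Gaussian upgrade.} With the bound $1/\gamma$ available at both $x$ and $y$, I would run the exponential-weight (integrated maximum principle) estimate of \cite[Proposition 1.1]{grigor1994heat}: twisting the heat semi-group by $e^{\alpha d(\,\cdot\,,x)}$ costs only a factor $e^{C\alpha^2 t}$ in the on-diagonal bound, so that $h_t(x,y)\leq \frac{C}{\gamma(t)}e^{C\alpha^2 t}e^{-\alpha d(x,y)}$; optimising in $\alpha$ gives the first estimate
\begin{align*}
h_t(x,y)\leq \frac{C}{\min\{1,t^{n/2}\}}\exp\!\Bigl(\lambda_1 t-\frac{d(x,y)^2}{Dt}\Bigr),\qquad D=4C>4,
\end{align*}
valid for all $t>0$. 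The second estimate is the standard refinement in which the weight argument is iterated along a geodesic from $x$ to $y$ (equivalently, combined with a parabolic $L^2$ mean-value inequality) and the number of steps is optimised as well; for $t>1$ this lowers the exponent to the near-optimal $\tfrac14$ at the price of a polynomial prefactor $(1+d(x,y)^2/t)^{\sigma}$ whose exponent comes out of the optimisation (here $\sigma=1+\tfrac1n$), and since $\min\{1,t^{n/2}\}=1$ for $t>1$ one obtains
\begin{align*}
h_t(x,y)\leq K\Bigl(1+\frac{d(x,y)^2}{t}\Bigr)^{1+\frac1n}\exp\!\Bigl(\lambda_1 t-\frac{d(x,y)^2}{4t}\Bigr).
\end{align*}

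\emph{Expected main obstacle.} Unlike the pinched negative curvature case, $X$ has exponential volume growth and so admits no global Sobolev or Faber--Krahn inequality; the on-diagonal estimate must therefore be assembled from a local short-time part, controlled by Maheux's inequality and the volume homogeneity $\vol B(x,r)=V(r)$, and a long-time part governed by the spectral gap $\rho^2=-\lambda_1$, and these have to be glued into a single \emph{increasing} function $\gamma$ so that the exponential-weight argument still applies and the decay $e^{\lambda_1 t}$ survives the passage to the off-diagonal bound. I expect the verification that Maheux's theorem genuinely yields a centre-independent local Sobolev inequality, together with the bookkeeping of constants through the Moser iteration and through the twisted-semi-group estimate (in particular tracking how the polynomial exponent $1+\tfrac1n$ arises), to be the technical heart of the argument.
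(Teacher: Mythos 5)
Your proposal is correct and follows essentially the same route as the paper: both reduce the lemma to \cite[Proposition 1.1]{grigor1994heat} by verifying its three hypotheses --- local volume doubling and non-collapsing of unit balls (immediate from harmonicity, since $\vol(B(x,r))$ is independent of $x$) and the local Poincar\'e inequality (from \cite[Theorem 1.1]{Maheux1995}, using that $X$ is Einstein and hence has Ricci curvature bounded below). The only difference is presentational: the paper simply cites Grigor'yan's proposition once these hypotheses are checked, whereas you additionally unfold its internal proof (the on-diagonal bound glued from the local Nash estimate and the spectral gap $-\lambda_1=\rho^2$, followed by the Gaussian upgrade via the increasing function $\gamma$), which is accurate but not required.
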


\begin{proof}
For the convenience of the reader, we will give a short exposition of the arguments to derive the estimates above.
By \cite[Proposition1.1]{grigor1994heat} the estimates of the lemma are valid if there are positive constants $a,b,c$ and $d$ such that  for any $x\in X$ the following properties hold:
\begin{enumerate}
\item For $r<R<1$ we have 
\begin{align*}
\frac{\vol(B(x,R))}{\vol(B(x,r))}\leq\Bigl(\frac{R}{r}\Bigr)^{d}.
\end{align*}
\item For any smooth function $f$ in the Ball $B(x,R)$ of radius $R<1$ we have:
\begin{align}\label{eq:Poincare inequality}
\int_{B(x,R)}\lvert\operatorname{grad} f(y)\rvert^2\,dy\geq\frac{b}{R^2}\int_{B(x,R/2)}(f(y)-\overline{f}(x))^2\,dy,
\end{align}
where $\overline{f}=\frac{1}{\vol(B(x,R/2))}\int_{B(x,R/2)}f(y)\,dy.$
\item $\vol(B(x,1))\geq c$.
\end{enumerate}
(i) and (iii) are obviously true on harmonic manifolds of rank one, since the volume of the ball does not depend on the centre.
(ii) is true for all manifolds with Ricci curvature bounded from below and dimension bigger then 2 see  \cite[Theorem 1.1]{Maheux1995} above. Where one chooses $p=q=2$ and uses that 
$$\int_{B(x,R)}\lvert\operatorname{grad} f(y)\rvert^2\,dy\geq \int_{B(x,R/2)}\lvert\operatorname{grad} f(y)\rvert^2\,dy.$$
\end{proof}
Note that (\ref{eq:Poincare inequality}) is called  Poincare inequality.
For the remainder of this chapter, we require further estimates which highlight why the rank one assumption in Theorem  \ref{thm:feller}, Theorem \ref{thm:trans} and  Theorem \ref{thm:exitbrown} is necessary. 

\begin{lemma}\label{expdecay}
Let $t\geq 0$. Then for $R\geq (D2\rho t)$ and  $x\in X$ there are constants $\eta,\kappa$ depending on $t$ such that:
$$\int_{X\setminus B(x,R)}h_t(x,y)\,dy\leq \kappa e^{-\eta R}.$$
\end{lemma}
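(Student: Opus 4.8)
The plan is to insert the Gaussian upper bound of Lemma~\ref{lemma:heateastimate4} into the radial form of the integral and then complete the square, using purely exponential volume growth to control the spherical volume.

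First I would reduce to $t>0$ (for $t=0$ the statement is degenerate since $h_0=\delta_x$). Since $X$ is harmonic, the heat kernel $h_t(x,\cdot)$ is radial about $x$, and by~(\ref{eq:volS}) together with harmonicity $\vol S(x,r)=\omega_{n-1}A(r)$. Passing to geodesic polar coordinates about $x$ therefore gives
\[
\int_{X\setminus B(x,R)}h_t(x,y)\,dy=\omega_{n-1}\int_R^\infty h_t(r)A(r)\,dr ,
\]
where $h_t(r)$ denotes the radial profile. Applying the first estimate of Lemma~\ref{lemma:heateastimate4} together with the bound $A(r)\le C_0 e^{2\rho r}$ coming from purely exponential volume growth bounds the right-hand side by
\[
\frac{C_1 e^{\lambda_1 t}}{\min\{1,t^{n/2}\}}\int_R^\infty \exp\!\Bigl(2\rho r-\frac{r^2}{Dt}\Bigr)\,dr ,
\]
where $C_1$ collects $\omega_{n-1}$, $C_0$ and the constant from Lemma~\ref{lemma:heateastimate4}.

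Next I would complete the square: $2\rho r-\frac{r^2}{Dt}=\rho^2 Dt-\frac{1}{Dt}(r-\rho Dt)^2$, so the integrand is a Gaussian centred at $r=\rho Dt$. This is the point where the hypothesis $R\ge 2\rho Dt$ is used: it guarantees $r-\rho Dt\ge R/2>0$ for every $r\ge R$, hence on the range of integration $(r-\rho Dt)^2\ge \tfrac{R}{2}(r-\rho Dt)$, and the resulting integral is elementary, bounded by $\tfrac{2Dt}{R}\,e^{-R^2/(4Dt)}$. Finally, $R\ge 2\rho Dt$ also gives $R^2/(4Dt)\ge \rho R/2$, so putting everything together
\[
\int_{X\setminus B(x,R)}h_t(x,y)\,dy\le \kappa\, e^{-\eta R},\qquad \eta=\tfrac{\rho}{2},
\]
where $\kappa$ absorbs the factor $C_1 e^{\lambda_1 t+\rho^2 Dt}/\min\{1,t^{n/2}\}$ and the harmless term $\tfrac{2Dt}{R}\le 1/\rho$; in particular $\kappa$ depends only on $t$ (and on the fixed data $\rho$, $D$, $n$ and the structural constants of the manifold).

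I do not anticipate a genuine obstacle here. The only structural point is that the linear term $2\rho r$, which enters precisely because the volume grows exponentially, shifts the Gaussian tail so that its centre sits at $\rho Dt$; the threshold $R\ge 2\rho Dt$ in the statement is exactly the condition keeping $R$ in the exponentially decaying part of that tail. Everything else is bookkeeping, chiefly verifying that each constant absorbed into $\kappa$ is genuinely $t$-dependent only.
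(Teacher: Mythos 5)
Your proof is correct and follows essentially the same route as the paper: insert the Gaussian upper bound of Lemma~\ref{lemma:heateastimate4}, pass to polar coordinates using purely exponential volume growth, and complete the square so that the hypothesis $R\ge 2\rho Dt$ keeps the integration range in the decaying Gaussian tail. If anything, your conclusion is more explicit than the paper's, which stops at the bound $e^{-R^2/(4Dt)}$ and asserts that suitable $\kappa,\eta$ exist, whereas you extract $\eta=\rho/2$ directly from $R^2/(4Dt)\ge \rho R/2$.
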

\begin{proof}
With Lemma  \ref{lemma:heateastimate4} and the purely exponential volume growth of $X$ we get:
\begin{align*}
\int_{X\setminus B(x,R)}h_t(x,y)\,dy&\leq \frac{C}{\min\{1,t^{n/2}\}}e^{\lambda t}\int_{R}^{\infty}e^{\frac{-r^2}{Dt}}A(r)\,dr\\
&\leq \frac{cC}{\min\{1,t^{n/2}\}}\int_{R}^{\infty}e^{-r^2/Dt}\cdot e^{2\rho r}\,dr\\
&\leq\frac{cC\sqrt{Dt}}{\min\{1,t^{n/2}\}}\int_{\frac{R}{\sqrt{Dt}}-\frac{\sqrt{Dt}2\rho}{2}}^{\infty}e^{-s^2}\,ds,
\end{align*}
where the lower bound of the integral is bigger than zero by our assumptions on $R$.
Now by the bounds on the Gaussian integral, there is a $C_0>0$ such that:
\begin{align*}
\int_{X\setminus B(x,R)}h_t(x,y)\,dy\leq\frac{C_0\sqrt{t}}{\min\{1,t^{n/2}\}}e^{-R^2/4Dt}.
\end{align*}
Since the exponential decay is dominant, we can choose $\kappa$ large enough and $\eta$ small enough to obtain the assertion. 
\end{proof}

\begin{satz}\label{thm:feller}
 Let $X$ be a non-compact simply connected harmonic manifold of rank one.
The heat semi-group $(e^{t\Delta})_{t\ge0}$ is Feller and the paths of the associated Markov process called the Brownian motion $(B_t)_{t\geq0}$ with probability space $(\Sigma,\mathcal{F},\mathbb{P})$ is continuous for all $\omega\in\Sigma$.
\end{satz}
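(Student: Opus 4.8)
The plan is to verify the five Feller-semigroup axioms from Section~2 for $(e^{t\Delta})_{t\ge0}$ acting on $C_0(X)$, and then to apply Theorem~\ref{thm:markov} together with Theorem~\ref{thm:conpath} to obtain the associated Markov process with continuous paths. Most of the five axioms come essentially for free from the material assembled in the preliminaries: positivity of the heat semigroup and its contraction property on $L^\infty$ were recorded in Section~2.2.3; the conservativity $e^{t\Delta}1_X = 1_X$ (equivalently $\|h_t\|_1=1$) is exactly the stochastic completeness established in Lemma~\ref{complete}. Thus axioms (1), (2) and (5) require only assembling references. The two substantive points are (3), that $e^{t\Delta}$ maps $C_0(X)$ into $C_0(X)$, and (4), strong continuity on $C_0(X)$; and then the path-continuity statement, for which one must check the hypothesis of Theorem~\ref{thm:conpath}.

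For the Feller property (3), I would argue as follows. Write $e^{t\Delta}f(x) = (f*h_t)(x) = \int_X f(y)\,\tau_x h_t(y)\,dy$ using the convolution representation from Section~2.2.3. Continuity of $x\mapsto e^{t\Delta}f(x)$ for fixed $t>0$ follows from the smoothness of the heat kernel together with the Gaussian-type upper bound of Lemma~\ref{lemma:heateastimate4} (which gives a dominating function, so one may differentiate or at least pass to the limit under the integral sign); alternatively one quotes local parabolic regularity. For vanishing at infinity, fix $\epsilon>0$, choose $R$ so large that $\|f\|_\infty \cdot \int_{X\setminus B(x,R)} h_t(x,y)\,dy < \epsilon$ uniformly in $x$ — this uniformity is precisely what Lemma~\ref{expdecay} provides, since the bound $\kappa e^{-\eta R}$ there does not depend on the centre $x$ (a point where the harmonicity, i.e.\ centre-independence of volumes, is used). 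Then split the integral defining $e^{t\Delta}f(x)$ into the part over $B(x,R)$ and the rest; the tail is $<\epsilon$, and on $B(x,R)$ we have $|f(y)|$ small once $x$ leaves a large compact set, because $y\in B(x,R)$ forces $y$ to be far out too. Hence $e^{t\Delta}f \in C_0(X)$.

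Strong continuity (4) I would reduce to a dense subclass: for $f\in C_c^\infty(X)$, $u(x,t)=e^{t\Delta}f(x)$ solves the heat equation with $u(\cdot,0)=f$, and $\|e^{t\Delta}f - f\|_\infty \to 0$ as $t\to 0$ follows from the standard short-time estimate — split $X = B(x,\epsilon)\cup (X\setminus B(x,\epsilon))$, use $\|h_t\|_1=1$ and the uniform continuity of $f$ on the ball, and use Lemma~\ref{expdecay} (or directly Lemma~\ref{lemma:heateastimate4}) to control the contribution of the far region, which tends to $0$ as $t\to 0$ because of the factor $e^{-d(x,y)^2/Dt}$. Since $e^{t\Delta}$ is a contraction on $C_0(X)$ by (1)–(3) and $C_c^\infty(X)$ is dense in $C_0(X)$ in $\|\cdot\|_\infty$, an $\epsilon/3$-argument upgrades strong continuity to all of $C_0(X)$. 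Finally, for the path-continuity conclusion, the transition probability $p(t,x,\cdot)$ of the Brownian motion is $p(t,x,E)=\int_E h_t(x,y)\,dy$, so $\frac{1}{t}p(t,x,X\setminus B(x,\epsilon)) = \frac{1}{t}\int_{X\setminus B(x,\epsilon)}h_t(x,y)\,dy$, and by Lemma~\ref{expdecay} (applicable once $t$ is small enough that $\epsilon \ge D2\rho t$) this is bounded by $\frac{1}{t}\kappa(t)e^{-\eta(t)\epsilon}$; one checks the explicit form from the proof of Lemma~\ref{expdecay}, namely $\lesssim t^{1/2}\min\{1,t^{n/2}\}^{-1}e^{-\epsilon^2/4Dt}$, which goes to $0$ as $t\to 0$. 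Hence the hypothesis of Theorem~\ref{thm:conpath} holds and almost all sample paths are continuous; combined with Theorem~\ref{thm:markov} this yields the Brownian motion on the path space with the stated continuity.

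The main obstacle I anticipate is the Feller property (3): verifying genuine uniformity in the centre $x$ when showing $e^{t\Delta}f$ vanishes at infinity. This is exactly the place where the harmonic (rank one, purely exponential volume growth) hypotheses enter in an essential way through Lemma~\ref{expdecay}, since in the general pinched-negative-curvature setting one instead uses a uniform upper bound on the volume of a ball of fixed radius, whereas here one exploits that this volume is literally the same at every point. Everything else is a careful but routine packaging of the heat-kernel estimates already in hand.
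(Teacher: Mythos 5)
Your proposal is correct and follows essentially the same route as the paper: verify the five axioms, with contraction, positivity and conservativity coming from the convolution structure and Lemma~\ref{complete}, and with the Feller property, strong continuity and path continuity all resting on the uniform tail estimate of Lemma~\ref{expdecay}. You are in fact slightly more careful than the paper in two spots --- the explicit $\epsilon/3$ density argument extending strong continuity from $C^{\infty}_c(X)$ to $C_0(X)$, and tracking the $t$-dependence of the constants when checking the hypothesis of Theorem~\ref{thm:conpath} --- but these are refinements of the same argument, not a different method.
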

\begin{proof}
Let us check the conditions one by one.
\begin{description}
\item[Contraction]
The action of the heat semi-group is given by convolution with the radial function $h_t$. By applying (\ref{eq:conalg}) we get for $f\in C_0^{\infty}(X)$
\begin{align*}
\lVert e^{t\Delta}f\rVert_{\infty}\leq \lVert h_t\rVert_{1}\cdot \lVert f\rVert_{\infty}.
\end{align*}
Since in Lemma \ref{complete}  we proved that $X$ is stochastic complete, we have $\lVert h_t\rVert_{1}=1$. Hence, the assertion follows.
\item[Positivity]
This follows from the positivity of $h_t$.
\item[Feller property]
Let $f\in C_0^{\infty}(X)$ and $\epsilon>0$ be given. Choose $x_0\in X$ and  $R\geq 0$ such that 
$\lvert f(y)\rvert\leq\frac{\epsilon}{2}$ for $y\in X\setminus B(x_0,R)$. Furthermore, let  $M>0$  be such that $M>2\lVert f\rVert_{\infty}$.
Given $t>0$, by  Lemma \ref{expdecay}, there is a constant $r_t$ such that
\begin{align*}
\int_{X\setminus B(x,r_t)}h_t(x,y)\,dy\leq \frac{\epsilon}{M},\quad \forall x\in X.
\end{align*}
Now for $x\in X$ such that $d(x,x_0)>R+r_t$ we have $B(x,r_t)\subset X\setminus B(x_0,R)$ which   together with the positivity of $h_t$ yields that 
\begin{align*}
 \Bigl\lvert \int_X h_t(x,y)f(x)\,dy\Bigr\rvert  &\leq \int_{X\setminus B(x_0,r_t)}h_t(x,y)\lvert f(y) \rvert\, \,dy\\
&+\int_{B(x,r_t)}h_t(x,y)\lvert f(y)\rvert\,dy\\
&\leq \frac{M}{2}\frac{\epsilon}{M}+\frac{\epsilon}{2}\int_{B(x,r_t)}h_t(x,y)\,dy\\
&\leq \frac{\epsilon}{2}+\frac{\epsilon}{2}=\epsilon.
\end{align*}
Thus, $e^{t\Delta}f\in C_0^{\infty}(X)$.
\item[Conservative]
This we easily obtain by the stochastic completeness of $X$, Lemma \ref{complete}, 
\begin{align*}
e^{t\Delta}1_X=\int_{X}h_t(x,y)\,dy=1_X.
\end{align*}
\item[Strong continuity]
First, we observe that, since $f\in C_0^{\infty}(X)$ has a bounded differential, it is uniformly continuous. Let $\epsilon>0$ be given, let $\delta>0$ be such that $\lvert f(x)-f(y)\rvert <\frac{\epsilon}{2}$ for $d(x,y)<\delta$ 
and let $M>4\lVert f\rVert_{\infty}$. Then it follows from Lemma \ref{expdecay} that there is a $t_0>0$ such that for $0<t<t_0$ we have:
\begin{align*}
\int_{X\setminus B(x,\delta)}h_t(x,y)\,dy\leq \frac{\epsilon}{M}.
\end{align*}
Hence, we obtain for $x\in X$ and $0<t<t_0$ 
\begin{align*}
\lvert (e^{t\Delta}f)(x)-f(x)\rvert&=\Bigl\lvert \int_{X}h_t(x,y)(f(y)-f(x))\,dy\Bigr\rvert\\
&\leq \int_{B(x,\delta)}h_t(x,y)\lvert f(x)-f(y)\rvert\,dy\\
&+ \int_{X\setminus B(x,\delta)}h_t(x,y)\lvert f(x)-f(y)\rvert \,dy\\
&\leq\frac{\epsilon}{2}\int_{B(x,\delta)}h_t(x,y)\,dy+2\lVert f\rVert_{\infty}\frac{\epsilon}{M}\\
&\leq \frac{\epsilon}{2}+\frac{\epsilon}{2}=\epsilon.
\end{align*}
Hence, $\lVert e^{t\Delta}f-f\rVert_{\infty}\leq \epsilon$ for $0<t<t_0.$
\item[Continuous paths]
Let $\epsilon>0$ and choose $t>0$ such that $\epsilon=Dht$. Then we get:
\begin{align}\label{expdecay2}
\frac{1}{t}\int_{X\setminus B(x,\epsilon)}h_t(x,y)\,dy\leq \frac{\kappa e^{-\eta \epsilon}}{t}.
\end{align}
Hence, (\ref{expdecay2}) converges to zero as $t\to 0$. Now Theorem \ref{thm:conpath} implies
the assertion.
\end{description}
\end{proof}

With the formula for the Green kernel on non-compact  simply connected harmonic manifolds, 
we obtain: 
 \begin{lemma}\label{fenite}
 The Green kernel is finite for all $x,y\in X$ with $x\neq y$.
 \end{lemma}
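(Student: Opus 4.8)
The plan is to read off finiteness directly from the integral representation \eqref{green} of the Green kernel, using only that $X$ has purely exponential volume growth. Fix $x,y\in X$ with $x\neq y$ and set $\delta:=d(x,y)>0$. By \eqref{green},
\[
G(x,y)=\frac{1}{\vol(S_yX)}\int_{\delta}^{\infty}\frac{dr}{A(r)},
\]
and $\vol(S_yX)$ is a fixed positive constant (the volume of the Euclidean unit $(n-1)$-sphere), so it suffices to prove that $\int_{\delta}^{\infty}A(r)^{-1}\,dr<\infty$.

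I would split this integral at a radius $R_0\geq\max\{\delta,1\}$ chosen large enough that the estimate $\tfrac1C\le A(r)e^{-2\rho r}\le C$ of purely exponential volume growth holds for $r\ge R_0$. On the compact interval $[\delta,R_0]$ the density $r\mapsto A(r)=\det A_v(r)$ is continuous and strictly positive --- strict positivity because $X$ has no conjugate points, so $A_v(r)$ is invertible for every $r>0$ --- hence $A^{-1}$ is bounded there and $\int_{\delta}^{R_0}A(r)^{-1}\,dr<\infty$. For the tail one has $A(r)\ge\tfrac1C e^{2\rho r}$, so
\[
\int_{R_0}^{\infty}\frac{dr}{A(r)}\le C\int_{R_0}^{\infty}e^{-2\rho r}\,dr=\frac{C}{2\rho}\,e^{-2\rho R_0}<\infty .
\]
Adding the two contributions gives $G(x,y)<\infty$.

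There is no genuine difficulty here; the only point deserving a word of care is that the lower limit $\delta$ is strictly positive. This is precisely where the hypothesis $x\neq y$ enters, and it is also what keeps the integral away from $r=0$, where $A(r)\to 0$ and $A^{-1}$ fails to be integrable in dimension $n\geq 2$. The tail estimate is of the same elementary kind already used in the proof of Lemma \ref{expdecay}.
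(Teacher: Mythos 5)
Your proof is correct and follows essentially the same route as the paper: plug the formula \eqref{green} for the Green kernel into the purely exponential volume growth bound and conclude that the tail integral converges. Your version is in fact slightly more careful than the paper's one-line estimate, since you split off the compact interval $[\delta,R_0]$ rather than tacitly extending the bound $A(r)^{-1}\le c e^{-2\rho r}$ down to $r=0$, where it cannot hold because $A(r)\to 0$.
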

 \begin{proof}
 Since $X$ has purely exponential Volume growth we have:
 $$G(x,y)=\frac{1}{\vol(S_yX)}\int_{d(x,y)}^{\infty}\frac{1}{A(r)}\,dr\leq\int_{0}^{\infty}ce^{-2\rho r}\,dr<\infty$$
 \end{proof}
 
\begin{bem}
Note that Lemma \ref{fenite} holds in the more general case of a non-flat non-compact harmonic manifold.

\end{bem} 

 \begin{satz}[\cite{Grigor99}]
 The following are equivalent.
 \begin{enumerate}
 \item The Brownian Motion on $X$ is transient meaning, that for every point $x\in X$ we have: 
 \begin{align*}
 \mathbb{P}_x\Bigl(\lim_{t\to\infty} d(x, B_t(x)))\to\infty \Bigr)=1.
 \end{align*} 
 \item For all $x\neq y$ the Green kernel is finite. 
 \end{enumerate}
 \end{satz}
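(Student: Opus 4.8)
The plan is to realise the Green kernel as the density of the expected total occupation measure of the Brownian motion, and then to prove the two implications using the strong Markov property, which $(B_t)_{t\ge0}$ enjoys because it is a Feller process by Theorem~\ref{thm:feller} (a Feller process is strong Markov). The basic identity is the \emph{occupation time formula}: writing $\mathbb{E}_x$ for the expectation with respect to $\mathbb{P}_x$ on $C_M$, the relation $\int_X f(y)h_t(x,y)\,dy=(e^{t\Delta}f)(x)=\mathbb{E}_x(f(B_t))$ from Section~2 together with Tonelli's theorem gives, for every Borel set $A\subset X$,
\[
\mathbb{E}_x\Bigl(\int_0^\infty 1_A(B_s)\,ds\Bigr)=\int_0^\infty\int_A h_s(x,y)\,dy\,ds=\int_A G(x,y)\,dy=:u_A(x).
\]
Since in our setting $(2)$ already holds by Lemma~\ref{fenite} (indeed $G$ is finite off the diagonal by \eqref{green}), the real content is the implication $(2)\Rightarrow(1)$; I sketch the general argument for $(1)\Rightarrow(2)$ afterwards. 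Besides the strong Markov property I would use two standard facts about a Brownian motion with continuous paths and everywhere positive heat kernel on a complete manifold: for a bounded domain $\Omega$ with smooth boundary the exit time $\tau_\Omega$ satisfies $\sup_{z\in\overline\Omega}\mathbb{E}_z(\tau_\Omega)<\infty$, and for every compact set $K$ with non-empty interior $c_K:=\inf_{z\in K}\mathbb{E}_z\bigl(\int_0^1 1_K(B_s)\,ds\bigr)>0$, because $z\mapsto\int_0^1\int_K h_s(z,y)\,dy\,ds$ is continuous and strictly positive on the compact set $K$.

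For $(2)\Rightarrow(1)$, fix $x\in X$ and put $K=\overline{B(x,R)}$; then $u_K(x)<\infty$ since the Green kernel is locally integrable and harmonic, hence continuous, off the diagonal. Introduce the stopping times $\sigma_1=\inf\{t>1:B_t\in K\}$ and $\sigma_{k+1}=\inf\{t>\sigma_k+1:B_t\in K\}$, and let $N$ be the number of $k$ with $\sigma_k<\infty$. The intervals $[\sigma_k,\sigma_k+1]$ are disjoint and $B_{\sigma_k}\in K$ on $\{\sigma_k<\infty\}$ by continuity of the paths, so the strong Markov property at $\sigma_k$ yields $\mathbb{E}_x\bigl(1_{\{\sigma_k<\infty\}}\int_{\sigma_k}^{\sigma_k+1}1_K(B_s)\,ds\bigr)\ge c_K\,\mathbb{P}_x(\sigma_k<\infty)$; summing over $k$ gives $u_K(x)\ge c_K\,\mathbb{E}_x(N)$. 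Hence $\mathbb{E}_x(N)<\infty$, so $N<\infty$ $\mathbb{P}_x$-a.s.; but if the path visited $K$ at arbitrarily large times then every $\sigma_k$ would be finite and $N=\infty$. Therefore, $\mathbb{P}_x$-a.s., for every $R$ the path eventually leaves $\overline{B(x,R)}$ for good, which is exactly $d(x,B_t)\to\infty$ $\mathbb{P}_x$-almost surely.

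For the general implication $(1)\Rightarrow(2)$, fix $x_0\neq y_0$, choose $\epsilon>0$ with $d(x_0,y_0)>3\epsilon$, and set $B_1=B(y_0,\epsilon)$, $B_2=B(y_0,2\epsilon)$. The key step is the uniform sub-criticality $q:=\sup_{z\in\partial B_2}\mathbb{P}_z\bigl(\exists\,t\ge0:\ B_t\in\overline{B_1}\bigr)<1$. Indeed, if $q=1$, then a path started in $\overline{B_1}$ leaves $B_2$ a.s., is then a.s. driven back into $\overline{B_1}$, leaves again, and so on, so it would meet $\overline{B_1}$ at times tending to infinity with probability one, contradicting the a.s. finiteness of the last visiting time of $\overline{B_1}$ provided by $(1)$. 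Granting $q<1$, decompose a path started at $z\in\overline{B_1}$ into cycles, each consisting of running until the exit from $B_2$ and then attempting to return to $\overline{B_1}$: the number of completed returns is stochastically dominated by a geometric variable of parameter $1-q$, and the time spent in $B_1$ during one cycle is at most $\tau_{B_2}$, with $\mathbb{E}_z(\tau_{B_2})\le W:=\sup_{w\in\overline{B_1}}\mathbb{E}_w(\tau_{B_2})<\infty$. Hence $\sup_{z\in\overline{B_1}}u_{B_1}(z)\le W/(1-q)$, and the strong Markov property at the hitting time of $\overline{B_1}$ gives $u_{B_1}(x_0)\le W/(1-q)<\infty$; since $G$ is radial and $G(x_0,\cdot)$ is harmonic, hence finite and continuous, on $B_1$, this forces $G(x_0,y_0)<\infty$.

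I expect the main obstacle to be making the renewal-type bookkeeping rigorous. In $(2)\Rightarrow(1)$ one must be careful with the strong Markov property at the countably many stopping times $\sigma_k$—measurability, and the fact that $B_{\sigma_k}$ actually lies in $K$—in order to obtain $u_K(x)\ge c_K\,\mathbb{E}_x(N)$. In the general direction $(1)\Rightarrow(2)$ the delicate point is upgrading the purely pathwise statement $d(x,B_t)\to\infty$ to the \emph{uniform} return estimate $q<1$, and then combining the geometric control on the number of returns with the uniform bound $W$ on expected exit times. The remaining inputs—the strong Markov property of a Feller process, boundedness of expected exit times from bounded smooth domains, positivity and joint continuity of $h_t$, and local integrability of the Green kernel—are standard and I would quote rather than reprove them.
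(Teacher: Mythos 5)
The paper does not prove this statement at all: it is quoted as a classical result from \cite{Grigor99} and used as a black box (only in the direction finiteness of $G$ $\Rightarrow$ transience, combined with Lemma \ref{fenite} to get Theorem \ref{thm:trans}). So there is no in-paper argument to compare against; what you have written is essentially the standard probabilistic proof from the cited literature, built on the occupation-time identity $\mathbb{E}_x\bigl(\int_0^\infty 1_A(B_s)\,ds\bigr)=\int_A G(x,y)\,dy$ and the strong Markov property of the Feller process from Theorem \ref{thm:feller}. Your direction $(2)\Rightarrow(1)$ --- the only one the paper actually needs --- is sound: local integrability of $G$ near the diagonal holds because the paper assumes $\dim X>2$ and $A(r)\sim r^{n-1}$ near $0$, the constant $c_K$ is positive by strict positivity and continuity of the heat kernel, and the renewal bound $u_K(x)\ge c_K\,\mathbb{E}_x(N)$ correctly forces a last visit to every ball; intersecting over a countable exhaustion gives $d(x,B_t)\to\infty$ a.s.

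The one genuine soft spot, which you yourself flag, is the claim $q:=\sup_{z\in\partial B_2}\mathbb{P}_z(\exists t: B_t\in\overline{B_1})<1$ in $(1)\Rightarrow(2)$. Knowing $q=1$ does not by itself mean each exit point returns to $\overline{B_1}$ almost surely, so your "driven back in, leaves again, and so on" loop does not immediately run. The standard repair: the function $h(z)=\mathbb{P}_z(\exists t: B_t\in\overline{B_1})$ is harmonic on the connected open set $X\setminus\overline{B_1}$ with values in $[0,1]$ and is continuous up to $\partial B_2$; if $q=1$ then by compactness of $\partial B_2$ the supremum is attained, and the strong maximum principle forces $h\equiv 1$ off $\overline{B_1}$, whence the strong Markov property produces infinitely many returns to $\overline{B_1}$ at times tending to infinity, contradicting $(1)$. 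With that inserted, the geometric-number-of-returns bookkeeping and the bound $\sup_{z}u_{B_1}(z)\le W/(1-q)$ go through, and the proof is complete. Since the paper treats the whole equivalence as a citation, your write-up is strictly more than the paper provides, and modulo the maximum-principle step it is correct.
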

 Now with  Lemma \ref{fenite} and the theorem above, it immediately follows:
\begin{satz}\label{thm:trans}
 Let $X$ be a simply connected non-compact harmonic manifold of rank one and $(B_t)_{t\geq0}$ the Brownian motion on $X$. Then 
 for any point $x\in X$,
 $$\mathbb{P}_x\Bigl(\lim_{t\to\infty} d(x, B_t(x)))\to\infty \Bigr)=1.$$ 
Hence  $B_t$ is transient. 
\end{satz}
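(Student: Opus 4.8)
The statement is a direct corollary of the two results immediately preceding it, so the plan is to assemble them cleanly. First I would recall that, by Theorem~\ref{thm:feller}, the heat semi-group $(e^{t\Delta})_{t\ge 0}$ on $X$ is Feller; hence Theorem~\ref{thm:markov} together with the verification of the hypothesis of Theorem~\ref{thm:conpath} carried out inside the proof of Theorem~\ref{thm:feller} produces, for each $x\in X$, a Brownian motion $(B_t)_{t\ge 0}$ on $(C_M,\mathcal{C},\mathbb{P}_x)$ with almost surely continuous sample paths, whose transition probability is $p(t,x,dy)=h_t(x,y)\,dy$. In particular the probabilistic Green function, i.e. the expected occupation density $\int_0^\infty p(t,x,\cdot)\,dt$, coincides with the analytic Green kernel $G(x,y)=\int_0^\infty h_t(x,y)\,dt$ of (\ref{green}). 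This identification is exactly what allows the Grigor'yan dichotomy to be applied to the process constructed in the preliminary section.

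Next I would invoke the equivalence theorem of \cite{Grigor99} stated just above: the Brownian motion on $X$ is transient, meaning $\mathbb{P}_x\bigl(\lim_{t\to\infty}d(x,B_t(x))=\infty\bigr)=1$ for every $x\in X$, precisely when $G(x,y)<\infty$ for all pairs $x\neq y$. Finally, Lemma~\ref{fenite} delivers exactly this finiteness: substituting the purely exponential lower bound $A(r)\ge \frac{1}{C}e^{2\rho r}$ into the radial formula (\ref{green}) gives $G(x,y)\le \frac{C}{\vol(S_yX)}\int_{d(x,y)}^{\infty} e^{-2\rho r}\,dr<\infty$. Combining these three facts yields the claim for every $x\in X$.

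There is no substantive obstacle left at this stage; the genuine content — and the only place where the rank-one assumption is actually used — sits in Lemma~\ref{fenite}, since for a harmonic manifold of sub-exponential (for instance polynomial) volume growth the integral $\int^{\infty} 1/A(r)\,dr$ diverges and the process would instead be recurrent. Thus the remaining work for Theorem~\ref{thm:trans} itself is merely bookkeeping: checking that the analytic Green kernel $G$ of (\ref{green}) is the same object as the probabilistic Green function of the Feller process built in the preliminaries (immediate from $p(t,x,dy)=h_t(x,y)\,dy$), and then citing \cite{Grigor99} with the finiteness input supplied by Lemma~\ref{fenite}.
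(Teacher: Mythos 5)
Your proposal is correct and follows exactly the paper's route: the paper derives Theorem~\ref{thm:trans} immediately by combining the Grigor'yan equivalence (transience iff finiteness of the Green kernel) with Lemma~\ref{fenite}, which is precisely your argument. Your additional remarks on identifying the analytic and probabilistic Green kernels and on where rank one enters are sensible elaborations of the same proof rather than a different approach.
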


\section{Eigenfunctions via Exit Time of Brownian Motion}

\subsection{Fundamentals on the heat kernel and heat semi-group, and their connection to the Brownian motion}
In this subsection, we collect some well-known properties of the heat kernel and the heat semi-group which are of use in later stages of this section.
\begin{prop}\label{Delta}
Let $\psi\in C^{\infty}_c(X)$. Then
$$ \lim_{t\to 0}\frac{e^{t\Delta}\psi-\psi}{t}=\Delta \psi \text{ in } \Bigl ( B(X),\lVert\cdot\rVert_{\infty}\Bigr).$$
\end{prop}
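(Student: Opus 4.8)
The plan is to reduce the statement to the generator property of the heat semigroup on $L^2(X)$ and then upgrade the convergence from $L^2$ to uniform convergence by exploiting the smoothing properties of the heat kernel together with the Gaussian bounds of Lemma~\ref{lemma:heateastimate4}. First I would recall that for $\psi\in C^\infty_c(X)$ the function $u(x,t)=e^{t\Delta}\psi(x)$ solves the heat equation $\partial_t u=\Delta u$ with $u(\cdot,0)=\psi$, as already stated in the preliminaries, and that $\Delta\psi\in C^\infty_c(X)\subset L^2(X)$. Hence on $L^2(X)$ the spectral calculus gives $\frac{1}{t}(e^{t\Delta}\psi-\psi)\to\Delta\psi$ in $L^2$; the issue is only the topology.

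The key step is an explicit identity: since $\partial_s(e^{s\Delta}\psi)=\Delta e^{s\Delta}\psi=e^{s\Delta}\Delta\psi$ (the Laplacian commuting with the semigroup on smooth compactly supported data), we may write
\begin{align*}
\frac{e^{t\Delta}\psi-\psi}{t}-\Delta\psi=\frac{1}{t}\int_0^t\bigl(e^{s\Delta}\Delta\psi-\Delta\psi\bigr)\,ds.
\end{align*}
Therefore it suffices to show $\lVert e^{s\Delta}\varphi-\varphi\rVert_\infty\to 0$ as $s\to 0$ for $\varphi:=\Delta\psi\in C^\infty_c(X)$, because then the right-hand side is bounded in sup-norm by $\sup_{0<s<t}\lVert e^{s\Delta}\varphi-\varphi\rVert_\infty$, which tends to $0$ as $t\to0$. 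But this is exactly the strong-continuity statement already established in the proof of Theorem~\ref{thm:feller} (the \emph{strong continuity} bullet): for $\varphi\in C^\infty_c(X)$, uniform continuity of $\varphi$ together with Lemma~\ref{expdecay} (exponential decay of the mass of $h_t$ outside a small ball) gives $\lVert e^{s\Delta}\varphi-\varphi\rVert_\infty\to0$.

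The main obstacle, and the only point requiring care, is justifying the differentiation-under-the-semigroup identity $\partial_s e^{s\Delta}\psi=e^{s\Delta}\Delta\psi$ pointwise and the Fundamental Theorem of Calculus in the Banach space $(B(X),\lVert\cdot\rVert_\infty)$ — i.e.\ that $s\mapsto e^{s\Delta}\psi$ is a $C^1$ curve in $B(X)$ with the stated derivative. For $s>0$ this follows from analyticity of the heat semigroup and the Gaussian estimates of Lemma~\ref{lemma:heateastimate4}, which let one differentiate under the integral $\int_X h_s(x,y)\psi(y)\,dy$ and recognise the derivative as $\int_X h_s(x,y)\Delta\psi(y)\,dy=e^{s\Delta}\Delta\psi(x)$; continuity up to $s=0$ of $s\mapsto e^{s\Delta}\Delta\psi$ in $\lVert\cdot\rVert_\infty$ is again the strong-continuity fact above, so the integral identity extends to $[0,t]$ by a standard approximation. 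Once this is in hand the proposition is immediate. (Alternatively, one can avoid the $C^1$-curve discussion entirely by writing $e^{t\Delta}\psi(x)-\psi(x)=\int_X h_t(x,y)(\psi(y)-\psi(x))\,dy$ and performing a second-order Taylor expansion of $\psi$ at $x$, using the radiality of $h_t$ to kill the first-order term and Lemma~\ref{lemma:heateastimate4} to control the remainder; this is more computational but uses only ingredients already in the paper.)
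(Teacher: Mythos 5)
Your argument is correct and is essentially identical to the paper's proof: both rest on the identity $e^{t\Delta}\psi-\psi=\int_0^t e^{s\Delta}\Delta\psi\,ds$ (obtained by differentiating $\int_X h_s(x,y)\psi(y)\,dy$ in $s$, using the heat equation and Green's identity to move the Laplacian onto $\psi$) followed by the bound $\bigl\lVert\frac{e^{t\Delta}\psi-\psi}{t}-\Delta\psi\bigr\rVert_\infty\leq\sup_{0\leq s\leq t}\lVert e^{s\Delta}\Delta\psi-\Delta\psi\rVert_\infty$ and the strong continuity already established in Theorem~\ref{thm:feller}. No substantive differences to report.
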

\begin{proof}
Let $f_1\in C^{\infty}(M)$, $f_2\in C^{\infty}_c(X)$, $G\subset X$ a pre-compact domain with smooth boundary and $\operatorname{supp}(f_2)\subset G$. Then by Green's identity, we have:
\begin{align}\label{eq:Deltaselfadjoint}
\int_X\Delta f_1(x)\cdot f_2(x)\,dx=\int_X \Delta f_2(x)\cdot f_1(x)\,dx.
\end{align}
For given $f_2\in C^{\infty}_c(X)$ let $u(t,x):=(e^{t\Delta}f_2(x))$ for $t\geq 0 $. 
Then for all $t>0$
\begin{align*}
u(t,x)=\int_Xh_t(x,y)f_2(y)\,dx.
\end{align*}
Hence, by using (\ref{eq:Deltaselfadjoint}) and the heat equation we have :
\begin{align*}
u'(t,x)&=\int_X \left(\frac{\partial }{\partial t} h_t(x,y)\right) f_2(y)\,dy\\
&=\int_X\Delta_y h_t(x,y)f_2(y)\,dy\\
&=\int_X h_t(x,y)\Delta f_2(y)\,dy.
\end{align*}
Therefore $u'(t,x)\to \Delta f_2(x)$ as $t\to 0$. Hence, $u(\cdot,x)$ is differentiable as a function on $[0,\infty)$ for every $x\in M$. Therefore:
\begin{align*}
e^{t\Delta}f_2(x)-f_2(x)&=u(t,x)-u(0,x)\\
&=\int_0^t u'(s,x)\,ds\\
&=e^{s\Delta}\Delta f_2(x)\,ds.
\end{align*}
Hence, for $t>0$ we have:
\begin{align*}
\left\lVert\frac{e^{t\Delta}f_2-f_2}{t}-\Delta f_2\right\rVert_{\infty}&\leq \frac{1}{t}\int_0^t\lVert e^{s\Delta}\Delta f_2-\Delta f_2\rVert_{\infty}\,ds\\
&\leq \sup_{0\leq s\leq t}\lVert e^{s\Delta}\Delta f_2-\Delta f_2\rVert_{\infty},
\end{align*}
which by the strong continuity of the heat semi-group converges to $0$ as $t\to 0$. 
\end{proof}
\begin{folg}\label{folg:Delta}
For $f\in B(X)$ with compact support we have
$$\frac{e^{t\Delta}f-f}{t}\to \Delta f\text{ as }t\to 0$$
in the sense of distributions on $X$. 
\end{folg}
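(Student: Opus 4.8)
The plan is to transfer the heat semi-group from $f$ onto a test function by means of the symmetry of the heat kernel, and then quote Proposition \ref{Delta}. By definition, proving the asserted distributional convergence means showing that for every $\varphi\in C_c^\infty(X)$ one has
\[
\int_X \frac{e^{t\Delta}f(x)-f(x)}{t}\,\varphi(x)\,dx \;\longrightarrow\; \int_X f(x)\,\Delta\varphi(x)\,dx \qquad (t\to 0),
\]
the right-hand side being exactly the value $\langle \Delta f,\varphi\rangle$ of the distributional Laplacian of $f$ applied to $\varphi$.

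First I would record the relevant integrability facts. Since $f$ is bounded with compact support and the Riemannian volume is finite on compact sets, $f\in L^1(X)\cap L^\infty(X)$; hence $e^{t\Delta}f=f*h_t$ is defined and, by the convolution estimates preceding \eqref{eq:conalg} together with $h_t\in L^1(X)$, lies again in $L^1(X)\cap L^\infty(X)$, so $\tfrac{1}{t}(e^{t\Delta}f-f)$ is an honest locally integrable function defining a distribution. Because $h_t$ is radial it is symmetric, $h_t(x,y)=h_t(y,x)$, and since $f\in L^1(X)$, $\varphi\in L^1(X)\cap L^\infty(X)$ and $h_t\in L^1(X)$ in each variable, Fubini's theorem yields
\[
\int_X e^{t\Delta}f(x)\,\varphi(x)\,dx=\int_X\!\!\int_X h_t(x,y)f(y)\varphi(x)\,dy\,dx=\int_X f(y)\,e^{t\Delta}\varphi(y)\,dy,
\]
and therefore
\[
\int_X \frac{e^{t\Delta}f(x)-f(x)}{t}\,\varphi(x)\,dx=\int_X f(y)\,\frac{e^{t\Delta}\varphi(y)-\varphi(y)}{t}\,dy .
\]

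Now I would apply Proposition \ref{Delta} to $\psi=\varphi\in C_c^\infty(X)$: $\tfrac{1}{t}(e^{t\Delta}\varphi-\varphi)\to\Delta\varphi$ in $\bigl(B(X),\lVert\cdot\rVert_\infty\bigr)$. Pairing this uniform convergence against the fixed $L^1$-function $f$ gives
\[
\Bigl\lvert\int_X f\Bigl(\tfrac{1}{t}(e^{t\Delta}\varphi-\varphi)-\Delta\varphi\Bigr)\,dy\Bigr\rvert\le \lVert f\rVert_1\,\bigl\lVert \tfrac{1}{t}(e^{t\Delta}\varphi-\varphi)-\Delta\varphi\bigr\rVert_\infty\longrightarrow 0,
\]
so the right-hand side of the previous display tends to $\int_X f\,\Delta\varphi\,dy=\langle\Delta f,\varphi\rangle$, which proves the claim. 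I do not expect a genuine obstacle: all the analytic substance is already contained in Proposition \ref{Delta}, and the only points requiring a word of justification are the symmetry of $h_t$ with the attendant Fubini interchange — legitimate because every function involved is integrable against the $L^1$ kernel — and the elementary remark that compact support promotes the boundedness of $f$ to membership in $L^1(X)$.
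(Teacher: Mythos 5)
Your argument is correct and follows essentially the same route as the paper: move the difference quotient onto the test function and invoke Proposition \ref{Delta} together with $f\in L^1(X)$. The only cosmetic difference is that you justify the identity $\int_X e^{t\Delta}f\cdot\varphi = \int_X f\cdot e^{t\Delta}\varphi$ by the symmetry of $h_t$ and Fubini, whereas the paper simply quotes the self-adjointness of $e^{t\Delta}$ on $L^2$; both are legitimate here since $f$ and $\varphi$ are bounded with compact support.
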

\begin{proof}
Denote by $\mathcal{D}(X)$ the space of distributions ( i.e. the dual of $C^{\infty}_c(X)$, equipped with the weak* topology) on $X$ and by $(\cdot,\cdot)$ the pairing between $\mathcal{D}(X)$ and $C^{\infty}_c(X)$. Then, for $f\in B(X)$ with compact support and for $\psi\in C^{\infty}_c(X)$ the distribution $\Delta f$ is defined as $(\Delta f,\psi)=(f,\Delta \psi)$. Using the self-adjointness of $e^{t\Delta}$ on $L^2(M)$ we get:
\begin{align*}
\Bigl(\frac{e^{t\Delta}-\operatorname{id}}{t}f,\psi\Bigr)=\Bigl(f,\frac{e^{t\Delta}-\operatorname{id}}{t}\psi\Bigr).
\end{align*}
Since $\frac{e^{t\Delta}-\operatorname{id}}{t}\psi\to \Delta \psi$ uniform as $t\to 0$ by Proposition \ref{Delta} and since $f\in L^1(X)$, we conclude
\begin{align*}
\Bigl(f,\frac{e^{t\Delta}-\operatorname{id}}{t}\psi\Bigr)\to (f,\Delta \psi)=(\Delta f,\psi), 
\end{align*}
as $t\to0$. 
\end{proof}

Let $G\subset X$ be a pre-compact domain with smooth boundary $\partial G$. 
 Using an idea described in \cite[Chapter VII]{chavel1984eigenvalues} 
  we can obtain the Dirichlet heat kernel $h_G$ from the heat kernel $h_t$ in the following way:
 For each $x\in \overline{G}$ there is a
  continues function $g(\cdot,x,\cdot):[0,\infty)\times \overline{G} \to \R$ such that $g$ is smooth on $(0,\infty)$ and satisfies:
 \begin{align*}
 \frac{\partial g}{\partial t}&=\Delta_y g\text{ on } (0,\infty)\times \overline{G},\\
 g(0,x,y)&=0\quad\forall y \in\overline{G},\\
 g(t,x,y)&=-h_t(x,y)\quad \forall t>0 \text{ and }\forall y\in\partial G.
 \end{align*}
 Then we get for all $t>0$ and $x,y\in \overline{G}$
 \begin{align}\label{eq:dirichletg}
 h_G(t,x,y)=h_t(x,y)+g(t,x,y).
 \end{align}
 Note that since by definition $h_G$ vanishes at the boundary of $G$ one can use equation (\ref{eq:dirichletg}) to define $g$.
 We also obtain the parabolic maximum principle for the heat equation (see \cite[SectionVIII.1]{chavel1984eigenvalues}):
 Define for $T>0$ 
 \begin{align*}
 G_T&:=(0,T)\times G\\
 \text{and }\partial_PG_T&:=(\{0\}\times\overline{G}\cup ([0,T]\times\partial G),
 \end{align*}
 the latter is called the parabolic boundary of $G_T$.
 \begin{prop}[{\cite[Chapter VII]{chavel1984eigenvalues}}]\label{prop:parabolic}
 Let $u:\overline{G_T}\to \R$ be a continuous function which is a $C^{\infty}$-solution of the heat equation on $G_T$. Then:
 \begin{align*}
 \sup_{[0,T]\times \overline{G}} u&=\sup_{\partial_PG_T} u\\
 \text{and } 
  \inf_{[0,T]\times \overline{G}} u&=\inf_{\partial_PG_T} u.
  \end{align*}
  \end{prop}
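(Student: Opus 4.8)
The plan is to prove the statement about the supremum; the statement about the infimum then follows by applying the supremum case to $-u$, since $-u$ is again a continuous $C^\infty$-solution of the heat equation on $G_T$. Since $\partial_P G_T \subset \overline{G_T}$ and $u$ is continuous on the compact set $\overline{G_T}$, the inequality $\sup_{\partial_P G_T} u \le \sup_{\overline{G_T}} u$ is automatic, so the content is the reverse inequality. I would first reduce to a strict subinterval: it suffices to show $\sup_{[0,T']\times\overline G} u = \sup_{\partial_P G_{T'}} u$ for every $T' < T$, because letting $T' \uparrow T$ and using continuity of $u$ on $\overline{G_T}$ recovers the claim for $T$ itself (note $\partial_P G_{T'} \subset \partial_P G_T$, so the right-hand sides are controlled by $\sup_{\partial_P G_T} u$). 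So fix $T' < T$.

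The standard device is to introduce, for $\varepsilon > 0$, the perturbed function $u_\varepsilon(t,x) := u(t,x) - \varepsilon t$ on $\overline{G_{T'}}$. Then $u_\varepsilon$ is continuous on the compact set $\overline{G_{T'}}$, hence attains its maximum at some point $(t_0, x_0)$. I claim $(t_0,x_0) \in \partial_P G_{T'}$. Suppose not: then $x_0 \in G$ (an interior point of the manifold, where we may use normal coordinates) and $0 < t_0 \le T'$. At such a point, first- and second-order conditions for an interior-in-space maximum give $\operatorname{grad} u(t_0,x_0) = 0$ and, for the Hessian, $\Delta u(t_0,x_0) \le 0$ (the Laplacian is the trace of the Hessian, and at a spatial maximum the Hessian is negative semidefinite). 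For the time variable: if $t_0 < T'$ then $\partial_t u_\varepsilon(t_0,x_0) = 0$, and if $t_0 = T'$ then $\partial_t u_\varepsilon(t_0,x_0) \ge 0$ (maximum at the right endpoint of the time interval); in either case $\partial_t u(t_0,x_0) \ge \varepsilon$. But the heat equation $\partial_t u = \Delta u$ at $(t_0,x_0)$ then forces $\varepsilon \le \partial_t u(t_0,x_0) = \Delta u(t_0,x_0) \le 0$, contradicting $\varepsilon > 0$. Hence the maximum of $u_\varepsilon$ over $\overline{G_{T'}}$ is attained on $\partial_P G_{T'}$, so for all $(t,x) \in \overline{G_{T'}}$,
\begin{align*}
u(t,x) - \varepsilon t \le \max_{\partial_P G_{T'}} u_\varepsilon \le \max_{\partial_P G_{T'}} u \le \sup_{\partial_P G_T} u.
\end{align*}
Letting $\varepsilon \to 0$ gives $u(t,x) \le \sup_{\partial_P G_T} u$ for all $(t,x) \in \overline{G_{T'}}$, and then letting $T' \uparrow T$ yields $\sup_{\overline{G_T}} u \le \sup_{\partial_P G_T} u$, as required.

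The main obstacle, such as it is, is bookkeeping rather than depth: one must be careful that $x_0$ lies in the interior of the manifold $X$ (so that the usual calculus argument with the Hessian applies), which is exactly why $G$ is taken to be an open pre-compact domain and the maximum is sought over $\overline{G_{T'}}$ with $T' < T$ — the boundary $\partial G$ and the final time slice $\{T\}$ are precisely the places where the interior argument could fail, and these are collected into the parabolic boundary (plus the harmless slice at $t = T'$, handled by the one-sided derivative inequality and the $T' \uparrow T$ limit). No feature particular to harmonic manifolds is used; only that $X$ is a smooth Riemannian manifold, that $\Delta = \operatorname{div}\operatorname{grad}$ is the trace of the Hessian, and that $\overline{G_{T'}}$ is compact. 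For a fully detailed account one may also simply cite \cite[Section VIII.1]{chavel1984eigenvalues}, as the statement already does.
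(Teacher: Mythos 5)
Your proof is correct: the perturbation $u_\varepsilon = u - \varepsilon t$, the interior first/second-order conditions giving $\Delta u \le 0$ and $\partial_t u \ge \varepsilon$ at a non-parabolic-boundary maximum, and the reduction to $T' < T$ (needed because the equation is only assumed on the open cylinder $G_T$) constitute exactly the classical argument. The paper itself offers no proof of this proposition --- it is stated as a citation to \cite[Chapter VII]{chavel1984eigenvalues} --- and what you have written is precisely the standard proof that reference contains, so there is no divergence to report.
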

 \begin{folg}\label{folg:dirichlet1}
 Let $g$ be as in equation $(\ref{eq:dirichletg})$ then
 \begin{align*}
 g(t,x,y)\leq 0\quad \forall t> 0,x,y\in \overline{G}.
 \end{align*}
 Therefore, 
 \begin{align*}
 h_G(t,x,y)\leq h_t(x,y)\quad\forall t>0,x,y\in\overline{G}
 \end{align*} 
 and consequently
 \begin{align*}
 \int_G h_G(t,x,y)\,dy\leq 1.
 \end{align*}
 \end{folg}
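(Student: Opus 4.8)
The plan is to apply the parabolic maximum principle of Proposition \ref{prop:parabolic} to the auxiliary function $g$ of equation $(\ref{eq:dirichletg})$. Fix $x\in\overline{G}$ and $T>0$, and set $u:=g(\cdot,x,\cdot)$ restricted to $\overline{G_T}$. By the construction recalled before $(\ref{eq:dirichletg})$, $u$ is continuous on $\overline{G_T}$ and a $C^\infty$-solution of the heat equation $\partial_t u=\Delta_y u$ on $G_T$, so Proposition \ref{prop:parabolic} applies and yields $\sup_{[0,T]\times\overline{G}}u=\sup_{\partial_PG_T}u$.

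Next I would evaluate $u$ on the parabolic boundary $\partial_PG_T=(\{0\}\times\overline{G})\cup([0,T]\times\partial G)$. On $\{0\}\times\overline{G}$ one has $g(0,x,y)=0$ by the first defining property of $g$; on $(0,T]\times\partial G$ one has $g(t,x,y)=-h_t(x,y)\leq 0$ since the heat kernel is non-negative; and the two prescriptions are compatible at $t=0$, $y\in\partial G$. Hence $\sup_{\partial_PG_T}u\leq 0$, and therefore $g(t,x,y)\leq 0$ for all $(t,y)\in[0,T]\times\overline{G}$. As $T>0$ was arbitrary, $g(t,x,y)\leq 0$ for all $t>0$ and all $x,y\in\overline{G}$, which is the first assertion.

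The inequality $h_G(t,x,y)\leq h_t(x,y)$ is then immediate from $(\ref{eq:dirichletg})$, namely $h_G(t,x,y)=h_t(x,y)+g(t,x,y)$. Integrating over $G$ and combining this with the positivity of $h_t$ gives
\begin{align*}
\int_G h_G(t,x,y)\,dy\leq\int_G h_t(x,y)\,dy\leq\int_X h_t(x,y)\,dy=\lVert h_t\rVert_1,
\end{align*}
which equals $1$ by the stochastic completeness established in Lemma \ref{complete}.

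I do not expect a serious obstacle here; the only point requiring a little care is the regularity of $g$ up to the parabolic boundary, in particular the compatibility of the vanishing initial data with the boundary values $-h_t(x,y)$ at $t=0$ on $\partial G$. This compatibility is already part of the construction of $g$ taken from \cite[Chapter VII]{chavel1984eigenvalues}, so Proposition \ref{prop:parabolic} may be invoked directly without further work.
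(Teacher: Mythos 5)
Your proof is correct and is exactly the argument the paper intends: the corollary is stated immediately after Proposition \ref{prop:parabolic} precisely because it follows by applying the parabolic maximum principle to $g(\cdot,x,\cdot)$, whose parabolic boundary values are $0$ on the initial slice and $-h_t(x,y)\leq 0$ on the lateral boundary, with the remaining claims following from $h_G=h_t+g$, positivity of $h_t$, and stochastic completeness. No issues.
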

 With this, we can  obtain bounds on the function $g$.
 \begin{lemma}\label{lemma:boundg}
 There are constants $\alpha,\beta>0$ such that for every compact subset $K$ of $G$ we have
 \begin{align*}
 \sup_{y\in \overline{G}}(-g(t,x,y))\leq \alpha e^{-\delta^2\beta/t}\quad \forall t\in(0,1),x\in K,
 \end{align*}
 where $\delta$ is the distance from $K$ to $\partial G$. 
 \end{lemma}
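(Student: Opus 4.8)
The plan is to apply the parabolic maximum principle of Proposition \ref{prop:parabolic} to the function $-g(\cdot,x,\cdot)$ on the cylinder $G_T$ with $T=1$, and compare it to a carefully chosen supersolution built out of the Gaussian heat kernel bound from Lemma \ref{lemma:heateastimate4}. First I would fix $x\in K$ and recall from equation (\ref{eq:dirichletg}) that $g(\cdot,x,\cdot)$ solves the heat equation on $(0,\infty)\times\overline{G}$, vanishes at $t=0$, and equals $-h_t(x,y)$ on $[0,T]\times\partial G$. Hence $-g(\cdot,x,\cdot)$ is a continuous function on $\overline{G_T}$, smooth and solving the heat equation on $G_T$, so by Proposition \ref{prop:parabolic} its supremum over $[0,T]\times\overline{G}$ is attained on the parabolic boundary $\partial_P G_T$. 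On the $\{0\}\times\overline{G}$ part it is $0$, so
\begin{align*}
\sup_{y\in\overline{G}}(-g(t,x,y))\le\sup_{0<s\le t,\ y\in\partial G}h_s(x,y)\qquad\text{for }t\in(0,1).
\end{align*}
Here I use that $x\in K$ and $y\in\partial G$ force $d(x,y)\ge\delta$, where $\delta$ is the distance from $K$ to $\partial G$.

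Next I would feed the first estimate of Lemma \ref{lemma:heateastimate4} into the right-hand side. For $0<s\le t<1$ we have $\min\{1,s^{n/2}\}=s^{n/2}$, so that bound gives
\begin{align*}
h_s(x,y)\le\frac{C}{s^{n/2}}\exp\!\Bigl(\lambda_1 s-\frac{d(x,y)^2}{Ds}\Bigr)\le\frac{C}{s^{n/2}}\exp\!\Bigl(-\frac{\delta^2}{Ds}\Bigr),
\end{align*}
using $\lambda_1=-\rho^2<0$ and $d(x,y)\ge\delta$. The function $s\mapsto s^{-n/2}e^{-\delta^2/(Ds)}$ on $(0,t]$ is increasing for $s$ below its maximum at $s=\tfrac{2\delta^2}{nD}$; in any case the elementary inequality $s^{-n/2}\le c_n\,\delta^{-n}e^{\delta^2/(2Ds)}$ (obtained by absorbing the polynomial factor into half of the exponential, since $u^{n/2}e^{-u}$ is bounded for $u\ge 0$) yields
\begin{align*}
h_s(x,y)\le\frac{Cc_n}{\delta^{n}}\exp\!\Bigl(-\frac{\delta^2}{2Ds}\Bigr)\le\frac{Cc_n}{\delta^{n}}\exp\!\Bigl(-\frac{\delta^2}{2Dt}\Bigr),
\end{align*}
the last step because $s\le t$ and the exponent is decreasing in $s$. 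Setting $\beta=\tfrac{1}{2D}$ and $\alpha=Cc_n\delta^{-n}$ (or, to keep $\alpha$ independent of $K$ at the cost of a harmless constant, replacing $\delta^{-n}$ by a uniform bound on the relevant range and renaming) gives $\sup_{y\in\overline{G}}(-g(t,x,y))\le\alpha e^{-\delta^2\beta/t}$ for all $t\in(0,1)$ and $x\in K$, which is the claim.

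The only genuinely delicate point is the bookkeeping that turns the $s^{-n/2}$ singularity as $s\to 0$ into something harmless: one must split the exponential $e^{-\delta^2/(Ds)}$ and use that a power of $1/s$ times $e^{-c/s}$ stays bounded, so that the blow-up of the Gaussian prefactor is completely swallowed. Everything else — the applicability of the maximum principle (Proposition \ref{prop:parabolic}), the smoothness and boundary behaviour of $g$ from (\ref{eq:dirichletg}), and the sign information from Corollary \ref{folg:dirichlet1} which is not even strictly needed here — is routine. I would also remark that the constants $\alpha,\beta$ depend only on the dimension $n$, on $\rho$, and on $D$ and $C$ from Lemma \ref{lemma:heateastimate4}, so in particular they do not depend on the domain $G$, only the exponent's denominator is uniform while $\alpha$ carries the $\delta^{-n}$; if a $K$-independent $\alpha$ is wanted one shrinks $\beta$ slightly and absorbs the polynomial factor into the exponential once more.
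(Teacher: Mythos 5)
Your proposal is correct and follows essentially the same route as the paper: apply the parabolic maximum principle to $-g(\cdot,x,\cdot)$, reduce to $\sup_{0<s\le t,\,y\in\partial G}h_s(x,y)$ with $d(x,y)\ge\delta$, and then invoke the Gaussian bound of Lemma \ref{lemma:heateastimate4}. The paper compresses the final step into ``choose $\alpha$ large and $\beta$ small enough,'' whereas you carry out that bookkeeping explicitly (absorbing the $s^{-n/2}$ prefactor into half the exponential), which is a welcome clarification rather than a different argument.
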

 \begin{proof}
 Given $x\in K$ and $t\in (0,1)$, since $g(0,x,y)=0$ for all $y\in \overline{G}$ the parabolic maximum principle (Proposition \ref{prop:parabolic}) gives:
 \begin{align*}
 \sup_{y\in\overline{G}}(-g(t,x,y))&=\sup_{(s,y)\in \partial_PG_t}(-g(s,x,y))\\
 &\leq \sup_{0<s\leq t,y\in\partial G}h_s(x,y).
 \end{align*}
Note that $\delta\leq d(x,y)$ for all $x\in K$ and $y\in \partial G$. Hence, we get the assertion by choosing $\alpha>0$ large and $\beta>0$ small enough in Lemma \ref{lemma:heateastimate4}. 
 \end{proof}

Let $(B_t)_{t\geq 0}$ be the Brownian motion on $X$. Let $C_X$  denote the space of continuous paths $\gamma:[0,\infty)\to X$  on $X$ and  
\begin{align*}
\pi_t:C_X&\to X\\
\gamma&\mapsto \gamma(t)
\end{align*}
the evaluation map.
Equip $C_X$  with the the sigma-algebra $\mathcal{C}$ generated by $\pi_t$
and denote for  $x\in X$ by $\mathbb{P}_x$ the transition probability of $(B_t)_{t\geq 0}$ with initial distribution $\delta_x$. Note that a path that dose not start in $x$ has zero probability.
 Given a pre-compact domain $G\subset X$, the exit time of the Brownian motion  $(B_t)_{t\geq 0}$ from $G$
 \begin{align*}
 \tau_G:C_X\to[0,\infty]
 \end{align*}
  is defined by 
\begin{align*}
\tau_{G}(\gamma)=\inf\{t>0:\gamma(t)\in X\setminus G\}.
\end{align*}
Note that, since the Brownian motion is transitive, $\mathbb{P}_x(\tau_G< \infty)=1$ for all pre-compact domains $G$ in $X$ and $x\in G$. Hence, the exit point of the Brownian motion from $G$ 
\begin{align*}
\pi_G:\{\gamma\in C_X\mid \tau_G<\infty\}&\to X\\
\gamma&\mapsto \gamma(\tau_G(\gamma))
\end{align*}
is defined $\mathbb{P}_x$ almost everywhere. 
From now on if not mentioned otherwise assume $x\in G$. 
Since the sample path is continuous, we have $\mathbb{P}_x(\pi_G\in\partial G)=1$. 
Furthermore, note that if 
\begin{align*}
\theta_t:C_X&\to C_X\\
(s\mapsto \gamma(s))&\mapsto (s\mapsto \gamma(s+t))
\end{align*}
is the time shift, then $\tau_G\circ\theta_t=\tau_G-t$ for all $\gamma\in C_x$ with $t\leq \tau_G(\gamma)<\infty$. 
It is well known (see for instance \cite{Grigor99}) that for every $x\in G$ 
\begin{align}\label{eq:dirichletwsk}
\mathbb{P}_x(\tau_G\geq t)=\int_Gh_G(t,x,y)\,dy.
\end{align} 
To simplify the notion we will shorten $\tau_G=\tau$ and $\pi_G=\pi$, omit the argument of functions unless it is fixed, and denote the probability density function associated to a cumulative distribution function $F$ by $dF$. 

\begin{prop}\label{prop:measurheat}
For any $\lambda\in \C$ with $\operatorname{Re}\lambda
> \lambda_1$ there is a constant $C_{\lambda}$ such that:
\begin{align*}
\int_{C_X} \lvert e^{-\lambda\tau}\rvert\,d\mathbb{P}_x\leq C_{\lambda}
\end{align*}
for all $x\in G$. In other words, the  measure $e^{-\lambda\tau}\,d\mathbb{P}_x$ has finite total variation for all $x\in G$. 
\end{prop}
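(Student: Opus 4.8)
The plan is to estimate $\int_{C_X}\lvert e^{-\lambda\tau}\rvert\,d\mathbb{P}_x = \int_{C_X} e^{-(\operatorname{Re}\lambda)\tau}\,d\mathbb{P}_x$ by splitting $[0,\infty)$ into the intervals $[n,n+1)$ and bounding the tail probabilities $\mathbb{P}_x(\tau\geq n)$. Writing $a=\operatorname{Re}\lambda$ and using that $\tau\geq 0$, one has
\begin{align*}
\int_{C_X} e^{-a\tau}\,d\mathbb{P}_x \leq \sum_{n=0}^{\infty} e^{-a n}\,\mathbb{P}_x(\tau\geq n)
\end{align*}
when $a\geq 0$ (and for $a<0$ one first notes $\lambda_1 = -\rho^2 < 0$, so the borderline region $\lambda_1 < \operatorname{Re}\lambda < 0$ must also be handled; there the factor $e^{-an}$ grows, so the decay of $\mathbb{P}_x(\tau\geq n)$ must beat it). By \eqref{eq:dirichletwsk}, $\mathbb{P}_x(\tau\geq n) = \int_G h_G(n,x,y)\,dy$, and by Corollary \ref{folg:dirichlet1} this is bounded by $\int_G h_n(x,y)\,dy \leq 1$. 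To get exponential decay in $n$ I would instead use the sharper heat kernel bound from Lemma \ref{lemma:heateastimate4}: for $t\geq 1$,
\begin{align*}
h_t(x,y)\leq \frac{C}{\min\{1,t^{n/2}\}}\exp\!\Bigl(\lambda_1 t - \frac{d(x,y)^2}{Dt}\Bigr)\leq C\,e^{\lambda_1 t},
\end{align*}
so that $\mathbb{P}_x(\tau\geq t)\leq C\,e^{\lambda_1 t}\,\vol(G)$ for all $t\geq 1$, uniformly in $x\in G$.

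Feeding this into the sum gives, for the relevant range of $\lambda$,
\begin{align*}
\int_{C_X} e^{-a\tau}\,d\mathbb{P}_x \leq e^{-a\cdot 0}\cdot 1 + \sum_{n=1}^{\infty} e^{-an}\,C\,\vol(G)\,e^{\lambda_1 n} = 1 + C\,\vol(G)\sum_{n=1}^{\infty} e^{(\lambda_1 - a)n},
\end{align*}
and since $a = \operatorname{Re}\lambda > \lambda_1$ the geometric series converges, with sum depending only on $\operatorname{Re}\lambda$ (and on $G$, which is fixed). This produces the constant $C_\lambda$, manifestly independent of $x\in G$. The case $a\geq 0$ is even easier since then the crude bound $\mathbb{P}_x(\tau\geq n)\leq 1$ already suffices when $a>0$, and the refined bound handles $a=0$ and the subtle strip $\lambda_1<a\leq 0$.

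The main obstacle is making sure the tail bound $\mathbb{P}_x(\tau\geq t)\leq C e^{\lambda_1 t}$ is genuinely uniform in $x\in G$ and correctly invokes the second (or first) estimate of Lemma \ref{lemma:heateastimate4} — in particular that the prefactor $\min\{1,t^{n/2}\}^{-1}$ is harmless for $t\geq 1$ (where it equals $1$) and that integrating the Gaussian factor $\exp(-d(x,y)^2/Dt)$ over the bounded set $G$ only improves the bound. A small amount of care is also needed at the junction $t=1$: for $t\in[0,1]$ one simply uses $\mathbb{P}_x(\tau\geq t)\leq 1$, which costs nothing since $e^{-at}$ is bounded on $[0,1]$ for any fixed $\lambda$. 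Assembling these pieces yields the claim.
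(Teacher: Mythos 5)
Your proposal is correct and follows essentially the same route as the paper: both proofs reduce to the case $\lambda_1<\operatorname{Re}\lambda<0$ and hinge on the identical key estimate $\mathbb{P}_x(\tau\geq t)=\int_G h_G(t,x,y)\,dy\leq\int_G h_t(x,y)\,dy\leq C_1e^{\lambda_1 t}$ for $t\geq 1$, uniformly in $x\in G$, obtained from (\ref{eq:dirichletwsk}), Corollary \ref{folg:dirichlet1} and Lemma \ref{lemma:heateastimate4}, after which $\operatorname{Re}\lambda>\lambda_1$ forces convergence. The only difference is bookkeeping — you decompose into unit intervals and sum a geometric series where the paper integrates by parts against $F(s)=\mathbb{P}_x(\tau\geq s)$ — and for $\operatorname{Re}\lambda<0$ your displayed sum should use the supremum $e^{-a(n+1)}$ of $e^{-a\tau}$ on $[n,n+1)$ rather than $e^{-an}$, which only changes $C_\lambda$ by the harmless factor $e^{\lvert\operatorname{Re}\lambda\rvert}$ you already anticipate.
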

\begin{proof}
$\mathbb{P}_x$ is a probability measure 
and $\lvert e^{-\lambda s}\rvert=e^{-(\operatorname{Re}\lambda) s }$. Hence, for $\operatorname{Re}
 \lambda\geq 0$ we are done. 
Therefore we may without loss of generality assume that $\lambda$ is real and $\lambda_1<\lambda<0$. Define $F(s)=\mathbb{P}_x(\tau\geq s)$.
Then $F$ is a monotonically decreasing function. From (\ref{eq:dirichletwsk}), Corollary \ref{folg:dirichlet1} together with the second formula from Lemma \ref{lemma:heateastimate4} we obtain for $t>1$
\begin{align*} 
\int_G h_G(t,x,y)\,dy&\leq \int_G h_t(x,y)\,dy\\
&\leq K\Bigl(1+\frac{\operatorname{diam} G}{t}\Bigr)\cdot e^{\lambda_1 t}\\
&\leq C_1e^{\lambda_1 t}
\end{align*}
for some constant $C_1>0$, where $\operatorname{diam} G$ is the diameter of $G$. 
Therefore $F(s)\leq C_1e^{\lambda_1 s}$ for $s>1$ and hence $e^{-\lambda s}F(s)\to 0$ as $s\to\infty$.
Hence, via integration by parts we obtain:
\begin{align*}
\int_{C_X} e^{-\lambda \tau}\,d\mathbb{P}_x&=-\int_0^{\infty}e^{-\lambda s}dF(s)\\
&=-[e^{-\lambda s} F(s)]_0^{\infty} +\int_0^{\infty} \lambda e^{-\lambda s}F(s)\,ds\\
&\leq 1+\int_0^{1}(-\lambda)e^{-\lambda s}\,ds+C_1\int_1^{\infty} e^{(\lambda_1-\lambda)s}\,ds\\
&=C_{\lambda}.
\end{align*}
\end{proof}
\begin{lemma}\label{lemma:heatball}
There are constants $\gamma,\beta'>0$, such that for any compact subset $K$ of $G$, for all $x\in K$ and $0<t<\min(1,\delta/(D\rho))$, where $D$ as in Lemma \ref{lemma:heateastimate4}, $\rho$ is half the mean curvature of the horosphere and $\delta=\operatorname{dist}(K,\partial G)$. We have:
\begin{align*}
\mathbb{P}_x(\tau<t)\leq \gamma e^{-\gamma^2\beta'/t}.
\end{align*}
\end{lemma}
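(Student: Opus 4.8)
The plan is to express $\mathbb{P}_x(\tau<t)$ through the Dirichlet heat kernel and then reduce to two ingredients already available: the Gaussian upper bound for $h_t$ together with purely exponential volume growth, exactly as in the proof of Lemma~\ref{expdecay}, and the boundary correction bound of Lemma~\ref{lemma:boundg}. Since $\{\tau<t\}$ and $\{\tau\geq t\}$ partition $C_X$, equation \eqref{eq:dirichletwsk} gives $\mathbb{P}_x(\tau<t)=1-\int_G h_G(t,x,y)\,dy$. Inserting $h_G(t,x,y)=h_t(x,y)+g(t,x,y)$ from \eqref{eq:dirichletg} and using stochastic completeness $\int_X h_t(x,y)\,dy=1$ (Lemma~\ref{complete}) together with $g\leq 0$ (Corollary~\ref{folg:dirichlet1}), this becomes
\begin{align*}
\mathbb{P}_x(\tau<t)=\int_{X\setminus G}h_t(x,y)\,dy+\int_G\bigl(-g(t,x,y)\bigr)\,dy ,
\end{align*}
a sum of two non-negative terms to be bounded separately.

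The second term is handled directly by Lemma~\ref{lemma:boundg}: for $t\in(0,1)$ and $x\in K$ one has $\int_G(-g(t,x,y))\,dy\leq \vol(G)\,\alpha\,e^{-\delta^2\beta/t}$. For the first term I would use that $x\in K$ forces $B(x,\delta)\subseteq G$, so $\int_{X\setminus G}h_t(x,y)\,dy\leq\int_{X\setminus B(x,\delta)}h_t(x,y)\,dy$, and then repeat the computation in the proof of Lemma~\ref{expdecay} with $R=\delta$, now retaining the $t$-dependence: applying the first estimate of Lemma~\ref{lemma:heateastimate4} (with $\lambda_1<0$), the growth bound $A(r)\leq c\,e^{2\rho r}$, completion of the square in $-r^2/(Dt)+2\rho r$, and the tail estimate $\int_{s_0}^{\infty}e^{-s^2}\,ds\leq\tfrac{\sqrt\pi}{2}e^{-s_0^2}$, which holds precisely when $s_0\geq 0$, one obtains for $0<t<1$
\begin{align*}
\int_{X\setminus B(x,\delta)}h_t(x,y)\,dy\leq C'\,t^{(1-n)/2}\,e^{2\rho\delta}\,e^{-\delta^2/(Dt)} .
\end{align*}
The restriction $t<\delta/(D\rho)$ is exactly what makes the lower limit $s_0=\delta/\sqrt{Dt}-\rho\sqrt{Dt}$ non-negative, so the tail estimate is legitimate.

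To finish, the polynomial factor is absorbed into the exponential: splitting $e^{-\delta^2/(Dt)}=e^{-\delta^2/(2Dt)}e^{-\delta^2/(2Dt)}$ and using the elementary bound $\sup_{t>0}t^{(1-n)/2}e^{-\delta^2/(2Dt)}\leq C_n\,\delta^{-(n-1)}$, the first term is at most a constant, depending only on $G$ and on $\delta=\operatorname{dist}(K,\partial G)$, times $e^{-\delta^2/(2Dt)}$. Adding the two estimates and setting $\beta'=\min\{\beta,\tfrac{1}{2D}\}$ and $\gamma$ to be the resulting constant — which, as with $\alpha,\beta$ in Lemma~\ref{lemma:boundg}, depends only on $G$ and $\delta$ — gives $\mathbb{P}_x(\tau<t)\leq\gamma\,e^{-\beta'\delta^2/t}$, the claimed form. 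I do not expect a genuine obstacle here: the whole argument is a bookkeeping synthesis of results proved above, and the only points needing care are tracking the powers of $t$ in the Gaussian estimate and checking the sign condition $s_0\geq 0$ throughout the stated range of $t$.
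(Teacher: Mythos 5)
Your proposal is correct and follows essentially the same route as the paper: write $\mathbb{P}_x(\tau<t)=1-\int_G h_G(t,x,y)\,dy$, decompose via $h_G=h_t+g$, control $-g$ by Lemma \ref{lemma:boundg}, and control $\int_{X\setminus B(x,\delta)}h_t(x,y)\,dy$ by the Gaussian upper bound exactly as in Lemma \ref{expdecay}. You are in fact somewhat more careful than the paper in tracking the power of $t$ and the nonnegativity of the lower integration limit; the only cosmetic difference is that absorbing the factor $t^{(1-n)/2}$ makes your constant $\gamma$ depend on $\delta$ (hence on $K$), which is harmless for the way the lemma is used later.
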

\begin{proof}
By using equation (\ref{eq:dirichletg}), Lemma \ref{lemma:boundg}, equation (\ref{eq:dirichletwsk}) and Lemma \ref{expdecay} we get: 
\begin{align*} 
\mathbb{P}_x(\tau<t)&=1-\int_G h_G(t,x,y)\,dy\\
&=1-\int_G h_t(x,y)\,dy-\int_G g(t,x,y)\,dy\\
&\leq \int_{X\setminus B(x,\delta)} h_t(x,y)\,dy+\operatorname{vol}(G)\alpha e^{-\delta\beta/t}\\
&\leq \kappa e^{-\delta^2\eta/t} +\operatorname{vol}(G)\alpha e^{-\delta\beta/t}\\
&\leq \gamma e^{-\gamma^2\beta'/t},
\end{align*}
with $\beta'=\min(\eta,\beta)$ and $\gamma=\kappa +\alpha\operatorname{vol}(G)$.
\end{proof}
Before starting with the proof of Theorem \ref{thm:exitbrown} we need some preliminary definitions:
We fix $\lambda\in \C$ with $\operatorname{Re}\lambda>\lambda_1$ and define the measure $\nu_{x,\lambda}$ on $\partial G$ by:
\begin{align*}
\nu_{x,\lambda}=\pi_{*}(e^{-\lambda\tau}d\mathbb{P}_x).
\end{align*}
Since $\mathbb{P}_x(\pi\in\partial G)=1$ for $x\in G$, we have that $\nu_{x,\lambda}$ has support on $\partial G$ and by Proposition \ref{prop:measurheat} is a complex 
measure 
of finite variation. Furthermore, we fix a non-constant continuous function $\phi:\partial G\to \C$ and define 
\begin{align*}
\psi(x):=\int_{\partial G} \phi(y)\,d \nu_{x,\lambda}.
\end{align*}
We can assume  $\phi$ to be continuous with compact support in $X$. Otherwise, we may extend $\phi$ in a suitable way. 
Define 
\begin{align*}
\Phi:\{\gamma\in C_X\mid \tau(\gamma)<\infty\}&\to \C\\
\gamma&\mapsto \phi(\pi(\gamma))
\end{align*}
and interpret $\Phi$ as the random variable $\phi(B_{\tau})$. Hence:
\begin{align*}
\psi(x)&=\int_{C_X}e^{-\lambda\tau}\Phi\,d\mathbb{P}_x\\
&=\int_{C_X}e^{-\lambda\tau}\phi(B_{\tau})\,d\mathbb{P}_x\\
&=\mathbb{E}_x(e^{-\lambda \tau}\phi (B_{\tau})).
\end{align*}
Note that for $x\in X\setminus \overline{G}$ we have  $\mathbb{P}_x(\tau=0)=1$ and $\mathbb{P}_x(\pi=x)=1$ hence $\psi(x)=\phi(x)$ for $x\in X\setminus\overline{G}$. Therefore, $\psi$ is a bounded function on $X$ with compact support. 
Lastly we define for $t>0,$ 
\begin{align}\label{no:pathspace}
\{t\leq \tau<\infty\}
\end{align}
 to be the subset of $C_X$ with exit times in the range $[t,\infty)$. Furthermore we define $\{\tau<t\}$ accordingly. 
 \subsection{Construction of Eigenfunctions}
 \begin{satz}\label{thm:exitbrown}
 Let $X$ be a non-compact simply connected harmonic manifold of rank one and let $\{B_t\}_{t\geq0}$ be the Brownian motion in $X$. Let $\lambda_1 < 0$ be the supremum of the Laplacian $\Delta$ on $L^2(X)$. Let $G\subset X$ be a pre-compact domain in $X$ with smooth boundary $\partial G$ and let $\tau$ be the first exit time from $G$ of $\{B_t\}_{t\geq0}$. Then for any $\lambda\in \C$ with 
 $\operatorname{Re}\lambda >\lambda_1$ and for any non constant continuous function $\phi:\partial G\to\C$ the function 
 $$\psi(x):=\mathbb{E}_x(e^{-\lambda \tau}\phi (B_{\tau}))$$
 is analytic on $G$ and is an eigenfunction of $\Delta$ on $G$ with eigenvalue $\lambda$ and boundary value $\phi$, meaning 
 \begin{align*}
 \Delta\psi(x)&=\lambda\psi\quad\forall x\in G\\
 \text{and}\\
 \psi(x)&\to \phi(y)\text { for } G\ni x\to y\in \partial G.
 \end{align*}
 \end{satz}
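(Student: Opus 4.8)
The plan is to follow Sullivan's strategy, adapted to the present framework via the Markov property of the Brownian motion and the local regularity theory for the heat equation. First I would establish the \emph{mean-value property} of $\psi$: for a small geodesic ball $B = B(x,r)$ with $\overline{B}\subset G$, the strong Markov property at the exit time $\tau_B$ together with the identity $\tau_G\circ\theta_{\tau_B} = \tau_G - \tau_B$ (valid on $\{\tau_B < \tau_G\}$, which has full $\mathbb{P}_x$-measure here since $\overline B\subset G$) gives
\begin{align*}
\psi(x) &= \mathbb{E}_x\bigl(e^{-\lambda\tau_G}\phi(B_{\tau_G})\bigr)
 = \mathbb{E}_x\Bigl(e^{-\lambda\tau_B}\,\mathbb{E}_{B_{\tau_B}}\bigl(e^{-\lambda\tau_G}\phi(B_{\tau_G})\bigr)\Bigr)\\
 &= \mathbb{E}_x\bigl(e^{-\lambda\tau_B}\psi(B_{\tau_B})\bigr)
 = \int_{C_X} e^{-\lambda\tau_B}\,\psi(\pi_B)\,d\mathbb{P}_x.
\end{align*}
The interchange of $\mathbb{E}_{B_{\tau_B}}$ inside requires knowing $\psi$ is bounded and measurable, which we have recorded (it equals $\phi$ off $\overline G$ and is controlled by Proposition~\ref{prop:measurheat}). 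This self-reproducing formula over small balls is the analytic engine.

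Next I would turn the mean-value identity into a differential equation. Using Lemma~\ref{lemma:heatball}, which says $\mathbb{P}_x(\tau_B < t)$ decays like $\gamma e^{-\gamma^2\beta'/t}$ as $t\to 0$, the exit time $\tau_B$ of a ball of radius $r$ is concentrated at scale $r^2$; expanding $e^{-\lambda\tau_B} = 1 - \lambda\tau_B + O(\tau_B^2)$ and using that, for radial starting data, $\mathbb{E}_x(\tau_B) = c_n r^2 + o(r^2)$ and $\mathbb{E}_x\bigl(\psi(B_{\tau_B})\bigr) - \psi(x) \sim \tfrac{1}{2n}\,\Delta\psi(x)\,r^2 + o(r^2)$ in the distributional sense — exactly the content that identifies $\Delta$ via averages over small spheres — one gets that the distributional Laplacian of $\psi$ satisfies $\Delta\psi = \lambda\psi$ on $G$. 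A clean way to package this, avoiding fragile pointwise expansions, is to test against $\eta\in C^\infty_c(G)$: write $\psi = e^{-\lambda t}\,e^{t\Delta_G}\psi$ as an identity of distributions on $G$ for the Dirichlet semigroup (which follows from the mean-value property applied at a deterministic small time together with $\tau_G\circ\theta_t = \tau_G - t$), differentiate at $t=0$ using Corollary~\ref{folg:Delta} (the statement that $\frac{e^{t\Delta}f - f}{t}\to \Delta f$ distributionally for bounded compactly supported $f$), and read off $\Delta\psi - \lambda\psi = 0$ in $\mathcal D(G)$. Elliptic regularity for $\Delta - \lambda$ (which has analytic coefficients, harmonic manifolds being analytic in normal coordinates, as recorded in the preliminaries) then upgrades the distributional solution $\psi$ to an analytic function on $G$ solving $\Delta\psi = \lambda\psi$ classically.

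Finally I would prove the boundary behaviour $\psi(x)\to\phi(y)$ as $G\ni x\to y\in\partial G$. Fix $y\in\partial G$ and $\varepsilon>0$; by continuity of $\phi$ choose a neighbourhood $V$ of $y$ with $|\phi - \phi(y)| < \varepsilon$ on $V\cap\partial G$. One splits
\begin{align*}
\psi(x) - \phi(y) = \mathbb{E}_x\bigl((e^{-\lambda\tau}-1)\phi(B_\tau)\bigr) + \mathbb{E}_x\bigl(\phi(B_\tau) - \phi(y)\bigr),
\end{align*}
and controls each term as $x\to y$. For the second term, smoothness of $\partial G$ gives a regular barrier at $y$, so $\mathbb{P}_x(\tau \geq \delta) \to 0$ and $\mathbb{P}_x(B_\tau \notin V)\to 0$ as $x\to y$; since $\phi$ is bounded, the contribution of $\{B_\tau\notin V\}$ is small and on $\{B_\tau\in V\}$ the integrand is below $\varepsilon$. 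For the first term, $|e^{-\lambda\tau}-1| \leq |\lambda|\tau e^{|\mathrm{Re}\,\lambda|\tau}$, and on $\{\tau<\delta\}$ this is $O(\delta)$ while the complementary event has vanishing probability; uniform integrability here is exactly Proposition~\ref{prop:measurheat}. Letting $x\to y$ and then $\varepsilon,\delta\to 0$ gives the claim.

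The main obstacle I anticipate is the rigorous passage from the probabilistic mean-value identity to the distributional PDE — specifically, justifying the interchange of expectation and the strong Markov step with the non-constant-time weight $e^{-\lambda\tau}$, and getting the $o(r^2)$ error terms to vanish uniformly enough to conclude $\Delta\psi = \lambda\psi$ rather than merely an inequality. Routing this through the Dirichlet heat semigroup identity $\psi = e^{-\lambda t} e^{t\Delta_G}\psi$ and Corollary~\ref{folg:Delta}, rather than through bare infinitesimal ball averages, is what I expect to make the argument go through cleanly; everything after that (elliptic regularity, boundary barriers) is standard.
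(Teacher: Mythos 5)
Your overall architecture matches the paper's: differentiate the semigroup acting on $\psi$ at $t=0$, compare with Corollary~\ref{folg:Delta} to get $\Delta\psi=\lambda\psi$ distributionally, upgrade by elliptic regularity, and handle the boundary values with exit-time estimates near $\partial G$ (your barrier argument is in substance the paper's Lemmas~\ref{lemma:thmexit1} and \ref{lemma:thmexit2} and Proposition~\ref{prop:exitcont}; note that for the term $\mathbb{E}_x((e^{-\lambda\tau}-1)\phi(B_\tau))$ with $\operatorname{Re}\lambda<0$ you need the tail integral $\int_{\{\tau\ge\delta\}}\lvert e^{-\lambda\tau}\rvert\,d\mathbb{P}_x\to 0$, not merely vanishing probability of $\{\tau\ge\delta\}$; this follows from Proposition~\ref{prop:measurheat} via H\"older, as in Lemma~\ref{lemma:thmexit1}(ii)).

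The genuine gap is the identity you call the clean route: $\psi=e^{-\lambda t}e^{t\Delta_G}\psi$ is \emph{false} as an exact identity. The relation $\tau_G\circ\theta_t=\tau_G-t$ holds only on $\{\tau_G\ge t\}$, so the Markov property at deterministic time $t$ gives
\begin{align*}
\psi(x)=e^{-\lambda t}\bigl(e^{t\Delta_G}\psi\bigr)(x)+\mathbb{E}_x\bigl(e^{-\lambda\tau}\phi(B_\tau)\,;\,\tau<t\bigr),
\end{align*}
and the correction term coming from paths that exit before time $t$ is exactly where the work lives: it is the analogue of the terms $A(x,t)$ and $B(x,t)$ in the paper's Lemma~\ref{lemma:semiheatkompact}, and one must show it is $o(t)$ uniformly on compact subsets $K\subset G$, which requires the Gaussian lower bound on $\mathbb{P}_x(\tau<t)$ (Lemma~\ref{lemma:heatball}) and, in the paper's version with the restarted exit time, an $L^p$/H\"older argument built on Proposition~\ref{prop:measurheat}. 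Your "anticipated obstacle" paragraph correctly identifies this as the hard step, but the proposed fix does not dissolve it; it only relocates it. A second, smaller mismatch: Corollary~\ref{folg:Delta} concerns the global semigroup $e^{t\Delta}$, not the Dirichlet one, so you must also control $e^{t\Delta_G}\psi-e^{t\Delta}\psi$ on compacta; this difference consists of $\int_{X\setminus G}h_t(x,y)\psi(y)\,dy$ and $\int_G g(t,x,y)\psi(y)\,dy$, both $o(t)$ by Lemma~\ref{expdecay} and Lemma~\ref{lemma:boundg}, but the check has to be made. (Your opening strong-Markov/mean-value step is not needed for the rest of the argument and is extra machinery the paper never establishes; the simple Markov property at deterministic times, encoded in $\theta_{t*}\mathbb{P}_x(B)=\int_X\mathbb{P}_y(B)h_t(x,y)\,dy$, suffices.)
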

The proof will be conducted via the following series of lemmas and propositions and as mentioned in the introduction we can follow along the lines of the proof in \cite{biswas2019sullivans}. 
 
\begin{lemma}\label{lemma:semiheatkompact}
For any compact subset $K$ of $G$ we have
\begin{align*}
\frac{e^{t\Delta}\psi-\psi}{t}\to\lambda\psi
\end{align*}
uniform on $K$. 
\end{lemma}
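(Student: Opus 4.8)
\textbf{Proof plan for Lemma \ref{lemma:semiheatkompact}.}
The plan is to exploit the Markov property of the Brownian motion together with the strong-Markov decomposition of the exit time at time $t$. Fix a compact $K\subset G$ and let $x\in K$. The starting point is the identity
$$
e^{t\Delta}\psi(x)=\int_X h_t(x,y)\psi(y)\,dy=\mathbb{E}_x\bigl(\psi(B_t)\bigr),
$$
which holds because $\psi$ is bounded with compact support and $h_t$ is the transition density of $(B_t)$. Next I would split the path space $C_X$ according to whether the Brownian motion has already exited $G$ by time $t$, i.e. over $\{\tau<t\}$ and $\{t\le\tau<\infty\}$ (using Theorem \ref{thm:trans} to discard $\{\tau=\infty\}$). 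On the event $\{t\le\tau\}$ the strong Markov property at time $t$, together with $\tau\circ\theta_t=\tau-t$ and $\pi\circ\theta_t=\pi$ on that event, gives
$$
\mathbb{E}_x\bigl(e^{-\lambda(\tau-t)}\phi(B_\tau)\mid\mathcal{F}_t\bigr)=\psi(B_t)\qquad\text{on }\{t\le\tau\},
$$
so that $\mathbb{E}_x\bigl(\mathbf 1_{\{t\le\tau\}} e^{\lambda t} e^{-\lambda\tau}\phi(B_\tau)\bigr)=\mathbb{E}_x\bigl(\mathbf 1_{\{t\le\tau\}}\psi(B_t)\bigr)$. Rearranging, this yields the key relation
$$
e^{t\Delta}\psi(x)=e^{\lambda t}\psi(x)-e^{\lambda t}\,\mathbb{E}_x\bigl(\mathbf 1_{\{\tau<t\}} e^{-\lambda\tau}\phi(B_\tau)\bigr)+\mathbb{E}_x\bigl(\mathbf 1_{\{\tau<t\}}\psi(B_t)\bigr).
$$

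From here the proof is a matter of estimating the two error terms and the factor $e^{\lambda t}$. Dividing by $t$ and writing $e^{t\Delta}\psi(x)-\psi(x)=(e^{\lambda t}-1)\psi(x)+\text{(errors)}$, the term $\frac{e^{\lambda t}-1}{t}\psi(x)\to\lambda\psi(x)$ uniformly on $K$ because $\psi$ is bounded. It therefore remains to show that
$$
\frac{1}{t}\,\mathbb{E}_x\bigl(\mathbf 1_{\{\tau<t\}}\,|e^{-\lambda\tau}\phi(B_\tau)|\bigr)\longrightarrow 0
\quad\text{and}\quad
\frac{1}{t}\,\mathbb{E}_x\bigl(\mathbf 1_{\{\tau<t\}}\,|\psi(B_t)|\bigr)\longrightarrow 0
$$
uniformly in $x\in K$ as $t\to 0$. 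Both are controlled by $\mathbb{P}_x(\tau<t)$: since $\phi$ and $\psi$ are bounded and $|e^{-\lambda\tau}|\le\max(1,e^{-(\operatorname{Re}\lambda)t})$ is bounded for $t$ in a bounded range, each error is at most a constant times $\mathbb{P}_x(\tau<t)$. Now Lemma \ref{lemma:heatball} gives $\mathbb{P}_x(\tau<t)\le\gamma e^{-\gamma^2\beta'/t}$ for all $x\in K$ and $0<t<\min(1,\delta/(D\rho))$ with $\delta=\operatorname{dist}(K,\partial G)>0$; hence $\frac1t\mathbb{P}_x(\tau<t)\le\frac{\gamma}{t}e^{-\gamma^2\beta'/t}\to0$ uniformly on $K$ because the exponential decay dominates the factor $1/t$. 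Combining these, $\frac{e^{t\Delta}\psi-\psi}{t}\to\lambda\psi$ uniformly on $K$.

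The main obstacle is the careful justification of the strong Markov / conditioning step: one must verify that $\{t\le\tau\}\in\mathcal{F}_t$, that $e^{-\lambda\tau}\phi(B_\tau)$ is integrable so that the conditional expectation and the rearrangement are legitimate (this is exactly Proposition \ref{prop:measurheat}), and that the shift identities $\tau\circ\theta_t=\tau-t$, $\pi\circ\theta_t=\pi$ hold on the relevant event (recorded in the discussion preceding the theorem). Once the decomposition identity is established rigorously, the remaining estimates are routine consequences of the boundedness of $\phi,\psi$ and the small-time bound of Lemma \ref{lemma:heatball}.
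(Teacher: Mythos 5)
Your proposal is correct, and the decomposition identity you arrive at is exactly the one in the paper: the paper writes $e^{t\Delta}\psi(x)=e^{\lambda t}\psi(x)-A(x,t)+B(x,t)$ with $A(x,t)=\int_{\{\tau<t\}}e^{-\lambda(\tau-t)}\Phi\,d\mathbb{P}_x$ and $B(x,t)=\int_{\{\tau<t\}}e^{-\lambda\tau\circ\theta_t}\Phi\circ\theta_t\,d\mathbb{P}_x$, which are precisely your two error terms. The treatment of $A$ is identical (bounded integrand times $\mathbb{P}_x(\tau<t)$, then Lemma \ref{lemma:heatball}). Where you genuinely diverge is the second term: the paper keeps the integrand $e^{-\lambda\tau\circ\theta_t}\Phi\circ\theta_t$, which is \emph{not} bounded on $\{\tau<t\}$ since $\tau\circ\theta_t$ is the exit time of the shifted path and $\operatorname{Re}\lambda$ may be negative; it therefore picks $p>1$ with $\operatorname{Re}(p\lambda)>\lambda_1$, controls the $L^p(\mathbb{P}_x)$-norm of $e^{-\lambda\tau\circ\theta_t}$ via the convex-combination formula $\theta_{t*}\mathbb{P}_x=\int\mathbb{P}_y\,h_t(x,y)\,dy$ and Proposition \ref{prop:measurheat} for $\lambda'=p\lambda$, and closes with H\"older, getting decay $e^{-\gamma^2\beta'/(qt)}$. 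You instead condition on $\mathcal{F}_t$ first (using $\{\tau<t\}\in\mathcal{F}_t$ and the Markov property) to collapse that integrand to $\psi(B_t)$, which is bounded by $M C_\lambda$ thanks to Proposition \ref{prop:measurheat} applied to $\lambda$ itself; the term is then at most $\lVert\psi\rVert_\infty\,\mathbb{P}_x(\tau<t)$. This is cleaner, avoids the auxiliary exponent, and even gives a slightly better rate; the price is that you must justify the Markov property for the integrable-but-unbounded functional $e^{-\lambda\tau}\Phi$ (a routine truncation argument, and you correctly flag it as the main technical point). One terminological nit: at the deterministic time $t$ this is the ordinary Markov property, not the strong one.
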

\begin{proof}
We have for $t>0$ that the exit point of a path does not change by translation by $t$ on $\{t\leq\tau<\infty\}$ and therefore $\pi\circ\theta_t=\pi$ on  $\{t\leq\tau<\infty\}$. Hence $\Phi\circ\theta_t=\Phi$ on $\{t\leq\tau<\infty\}$.

Furthermore, we have $\tau\circ\theta=\tau-t$ on $\{t\leq\tau<\infty\}$. By the semi-group property of the heat semi-group, we can represent $\theta_{t*}\mathbb{P}_x$ as a convex combination of the measures $\mathbb{P}_y$ by:
\begin{align}\label{eq:convemeasure}
\theta_{t*}(\mathbb{P})_x(B)=\int_X \mathbb{P}_{y}(B) h_t(x,y)\,dy.
\end{align}
Where $B\subset C_X$ is a Borel set. 
With this, we obtain for $x\in K$:
\begin{align*}
(e^{t\Delta}\psi)(x)&=\int_X h_t(x,y)\psi(y)\,dy\\
&=\int_X h_t(x,y)\Bigl (\int_{C_x}e^{-\lambda\tau}\Phi\,d\mathbb{P}_y\Bigr )\,dy\\
&=\int_{C_X} e^{-\lambda\tau}\Phi\,d(\theta_{t*}(\mathbb{P}_x))\\
&=\int_{\{t\leq \tau<\infty\}}e^{-\lambda\tau\circ\theta_t}\Phi\circ\theta_t\,d\mathbb{P}_x+\int_{\{t<\tau\}}e^{-\lambda\tau\circ\theta_t}\Phi\circ\theta_t\,d\mathbb{P}_x\\
&=\int_{C_X}e^{-\lambda(\tau-t)}\Phi\,d\mathbb{P}_x-\int_{\{\tau<t\}}e^{-\lambda(\tau-t)}\Phi\,d\mathbb{P}_x\\
&\quad +\int_{\{\tau<t\}}e^{-\lambda \tau\circ\theta_t}\Phi\circ\theta_t\,d\mathbb{P}_x\\
&=e^{\lambda t}\psi(x)-A(x,t)+B(x,t),
\end{align*}
where $A(x,t)=\int_{\{\tau<t\}}e^{-\lambda(\tau-t)}\Phi\,d\mathbb{P}_x$
and $B(x,t)=\int_{\{\tau<t\}}e^{-\lambda \tau\circ\theta_t}\Phi\circ\theta_t\,d\mathbb{P}_x$.
Therefore
\begin{align}
\frac{(e^{t\Delta}\psi)(x)-\psi(x)}{t}=\frac{e^{\lambda t}-1}{t}\psi(x)-\frac{1}{t}A(x,t)+\frac{1}{t}B(x,t).
\end{align}
 It is sufficient to show that $A(x,t)=o(t)$ and $B(x,t)=o(t)$ as $t\to0$ uniformly in $x\in K$.
Denote by $\delta$ the distance from $K$ to $\partial G$ and let $M>0$ be such that $\lvert \phi\rvert <M$ and therefore $\lvert\Phi\rvert< M$ on $\{t<\infty\}=\{\omega\in C_X\mid \tau(\omega)<\infty\}$.
Then by using the constants from Lemma \ref{expdecay}, $0<t<\min\{1,\delta/(D2\rho)\}$ and Lemma \ref{lemma:heatball} we get for $x\in K$ 
\begin{align*}
\frac{1}{t}\lvert A(x,t)\rvert&\leq \frac{1}{t}\int_{\{\tau< t\}}\lvert e^{-\lambda \tau} e^{\lambda t}\rvert \,d\mathbb{P}_x\\
&\leq \frac{1}{t} e^{2\lvert \lambda\rvert t} M\mathbb{P}_x(\tau<t)\\
&\leq \frac{1}{t}e^{2\lvert \lambda\rvert t} M \gamma e^{-\delta^2\beta'/t}\\
&\to 0 \text{ uniformly in } x\in X \text{ as } t\to 0,
\end{align*} 
where $\gamma$ and $\beta'$ are the constants from Lemma \ref{lemma:heatball}.
Finding an estimate for $B(x,t)$ is more involved. We have $\operatorname{Re}
 \lambda >\lambda_1$ hence we can choose $p>1$ such that $\lambda':=p\lambda$ also satisfies $\operatorname{Re} 
 \lambda'>\lambda_1$. We now want to estimate the $L^p$ norm of $e^{-\lambda\tau\theta_t}$ with respect to $\mathbb{P}_x$ and apply H\"older's inequality.
By using equation (\ref{eq:convemeasure}), Proposition \ref{prop:measurheat} and $\mathbb{P}_y(\tau=0)=1$ for $y\in X\setminus \overline{G}$ we obtain:
\begin{align*}
\int_{C_X}\lvert e^{-\lambda \tau\circ\theta_t}\rvert^p=&\int_X\Bigl ( \int_{C_X}\lvert e^{-\lambda'\tau}\rvert \,d\mathbb{P}_y\Bigr)h_t(x,y)\,dy\\
=&\int_{D}\Bigl ( \int_{C_X}\lvert e^{-\lambda'\tau}\rvert \,d\mathbb{P}_y\Bigr)h_t(x,y)\,dy\\&+\int_{X\setminus\overline{D}}\Bigl ( \int_{C_X}\lvert e^{-\lambda'\tau}\rvert \,d\mathbb{P}_y\Bigr)h_t(x,y)\,dy\\
=&\int_{D}\Bigl ( \int_{C_X}\lvert e^{-\lambda'\tau}\rvert \,d\mathbb{P}_y\Bigr)h_t(x,y)\,dy\\&+\int_{X\setminus\overline{D}}h_t(x,y)\,dy\\
\leq&\int_D C_{\lambda'}h_t(x,y)\,dy+1.
\end{align*}
By choosing $q$ H\"older conjugated to $p$ and using H\"older's inequality and Lemma \ref{lemma:heatball} we conclude:
\begin{align*}
\frac{1}{t}\lvert B(x,t)\rvert & \leq \frac{1}{t}\int_{ \{\tau<t \}} \lvert e^{-\lambda \tau\circ\theta_t}\rvert \cdot \lvert \Phi\circ\theta_t\rvert\,d\mathbb{P}_x\\
&\leq \frac{1}{t} M \int_{C_X} \lvert e^{-\lambda \tau\circ\theta_t}\rvert \cdot\mathcal{X}_{ \{\tau<t\} }\,d\mathbb{P}_x\\
&\leq \frac{M}{t}\Bigl (\int_{C_X}\lvert e^{-\lambda\tau\circ\theta_t}\rvert^p\,d\mathbb{P}_x\Bigr )^{1/p}\mathbb{P}_x(\tau<t)^{1/q}\\
&\leq \frac{M}{t}(C_{\lambda'}+1)^{1/p}\gamma e^{-\gamma^2\beta'/(qt)}\\
&\to 0 \text{ uniformly in } x\in X \text{ as } t\to 0,
\end{align*}
where for a Borel set $E$, $\mathcal{X}_E$ denotes the characteristic function. 
 This concludes the proof. 
\end{proof}
\begin{prop}\label{prop:anaexit}
The function $\psi$ is analytic on $G$ and satisfies
\begin{align*}
\Delta\psi=\lambda\psi
\end{align*}
on $G$.
\end{prop}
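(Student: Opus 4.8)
The plan is to bootstrap from Lemma \ref{lemma:semiheatkompact}, which already gives us, for every compact $K \subset G$, the convergence $\tfrac{1}{t}(e^{t\Delta}\psi - \psi) \to \lambda \psi$ uniformly on $K$. Since $\psi$ is bounded on $X$ with compact support (as noted in the discussion preceding Theorem \ref{thm:exitbrown}), Corollary \ref{folg:Delta} applies and tells us that $\tfrac{1}{t}(e^{t\Delta}\psi - \psi) \to \Delta\psi$ in the sense of distributions on $X$. Comparing the two limits on test functions supported in $G$: for $\varphi \in C^\infty_c(G)$ with support in some compact $K \subset G$, the uniform convergence on $K$ gives $\int_G \tfrac{1}{t}(e^{t\Delta}\psi - \psi)\varphi \, dx \to \int_G \lambda\psi\varphi\,dx$, while the distributional convergence gives the same integral converging to $(\Delta\psi, \varphi)$. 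Hence $(\Delta\psi, \varphi) = (\lambda\psi, \varphi)$ for all $\varphi \in C^\infty_c(G)$, i.e. $\Delta\psi = \lambda\psi$ as distributions on $G$.

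Next I would invoke elliptic regularity to upgrade this weak statement. The equation $\Delta\psi = \lambda\psi$, i.e. $(\Delta - \lambda)\psi = 0$, is a linear elliptic equation with analytic coefficients, since on a harmonic manifold the metric is real-analytic in normal coordinates (harmonic manifolds are Einstein, hence analytic by the Kazdan--DeTurck theorem, as recalled in the Preliminaries). Elliptic regularity for equations with analytic coefficients (analytic hypoellipticity of $\Delta - \lambda$) then forces any distributional solution $\psi$ to be real-analytic on $G$; in particular $\psi \in C^\infty(G)$ and the equation $\Delta\psi = \lambda\psi$ holds classically on $G$. This is exactly the mechanism already used in the Preliminaries to conclude that eigenfunctions of $\Delta$ and the Busemann functions are analytic.

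The main obstacle — though it is more bookkeeping than genuine difficulty — is justifying the interchange that lets the distributional limit and the pointwise/uniform limit be compared, i.e. making sure that the convergence in Corollary \ref{folg:Delta} is being tested against the same class of functions and that there is no discrepancy coming from the behaviour of $\psi$ near or outside $\partial G$ (where $\psi = \phi$ and the difference quotient need not converge to $\lambda\psi$). This is handled by restricting all test functions to be supported in the open set $G$, so that only the behaviour of $\psi$ on compact subsets of $G$ — precisely where Lemma \ref{lemma:semiheatkompact} gives uniform control — is ever seen. Once that is in place, the argument is a clean two-line comparison of limits followed by the citation of analytic elliptic regularity, and the proposition follows. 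The boundary behaviour $\psi(x) \to \phi(y)$ as $x \to y \in \partial G$ is a separate assertion of Theorem \ref{thm:exitbrown} and is not part of this proposition.
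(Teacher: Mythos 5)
Your proposal is correct and follows essentially the same route as the paper: both compare the limit of the difference quotient $\frac{1}{t}(e^{t\Delta}\psi-\psi)$ against test functions in $C^\infty_c(G)$, using Lemma \ref{lemma:semiheatkompact} for the uniform limit $\lambda\psi$ on compact subsets and Corollary \ref{folg:Delta} for the distributional limit $\Delta\psi$, and then conclude by analytic elliptic regularity (via the Einstein/Kazdan--DeTurck analyticity of the metric). Your additional remarks on restricting test functions to be supported inside $G$ merely make explicit what the paper's proof does implicitly.
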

\begin{proof}
Let $(\cdot,\cdot )$ denote the pairing between distributions on $G$ and $C^{\infty}_c(G)$. Given $\varphi\in C^{\infty}_c(G)$ with $K=\operatorname{supp}(\varphi)\subset G$, then on one hand  it follows from Lemma \ref{lemma:semiheatkompact} that for $t\to 0$:
\begin{align*}
\Bigl(\frac{e^{t\Delta}\psi-\psi}{t},\varphi\Bigr)\to (\lambda \psi,\varphi)
\end{align*}
on the other hand we have by Corollary \ref{folg:Delta} for $t\to 0$
\begin{align*}
\Bigl(\frac{e^{t\Delta}\psi-\psi}{t},\varphi \Bigr)\to (\Delta \psi,\varphi).
\end{align*}
It follows $\Delta\psi=\lambda \psi$ as distributions on $G$ and hence by elliptic regularity and since $X$ is Einstein $\psi$ is analytic on $G$ and $\Delta\psi=\lambda\psi$ as functions on $G$.
\end{proof}
This proves the first part of Theorem \ref{thm:exitbrown}. 
For the second part, we need two more lemmas. 
\begin{lemma}\label{lemma:thmexit1}
Let $z\in \partial G$. Then for any $t>0$:
\begin{enumerate}
\item $\mathbb{P}_x(\tau\geq t)\to 0$ as $G\ni x\to z\in\partial G$.
\item $\int_{\{\tau\geq t\}}\lvert e^{\lambda \tau}\rvert\,d\mathbb{P}_x\to 0$ as $G\ni x\to z\in\partial G$.
\end{enumerate}
\end{lemma}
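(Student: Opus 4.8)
The plan is to treat (1) as the substantive point and then obtain (2) from it by reproducing the integration-by-parts estimate from the proof of Proposition~\ref{prop:measurheat}. For (1), I would start from the identity~(\ref{eq:dirichletwsk}), $\mathbb{P}_x(\tau\geq t)=\int_G h_G(t,x,y)\,dy$, use the symmetry $h_G(t,x,y)=h_G(t,y,x)$ of the Dirichlet heat kernel, and expand $h_G=h_t+g$ as in~(\ref{eq:dirichletg}). For fixed $t>0$ and $y\in G$, the map $x\mapsto h_G(t,y,x)=h_t(y,x)+g(t,y,x)$ is continuous on $\overline{G}$, by continuity of $h_t$ and of $g(t,y,\cdot)$, and it vanishes at every $x\in\partial G$, since there $g(t,y,x)=-h_t(y,x)$; hence $h_G(t,y,x)\to h_G(t,y,z)=0$ as $G\ni x\to z\in\partial G$. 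By Corollary~\ref{folg:dirichlet1} and the first estimate of Lemma~\ref{lemma:heateastimate4} there is the $x$-independent bound $0\leq h_G(t,y,x)\leq h_t(y,x)\leq \frac{C}{\min\{1,t^{n/2}\}}e^{\lambda_1 t}$, which is integrable over the pre-compact domain $G$; dominated convergence then yields $\mathbb{P}_x(\tau\geq t)=\int_G h_G(t,y,x)\,dy\to 0$.

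For (2), note that $|e^{-\lambda\tau}|=e^{-(\operatorname{Re}\lambda)\tau}$, so if $\operatorname{Re}\lambda\geq 0$ the integrand is at most $1$ and $\int_{\{\tau\geq t\}}|e^{-\lambda\tau}|\,d\mathbb{P}_x\leq\mathbb{P}_x(\tau\geq t)\to 0$ by (1). Otherwise, exactly as in Proposition~\ref{prop:measurheat}, I may reduce to $\lambda_1<\lambda<0$ real. Writing $F_x(s)=\mathbb{P}_x(\tau\geq s)$, Stieltjes integration by parts gives
\begin{align*}
\int_{\{\tau\geq t\}}e^{-\lambda\tau}\,d\mathbb{P}_x=e^{-\lambda t}F_x(t)-\lambda\int_t^{\infty}e^{-\lambda s}F_x(s)\,ds,
\end{align*}
where the boundary term at infinity vanishes since, as in the proof of Proposition~\ref{prop:measurheat}, $F_x(s)\leq C_1e^{\lambda_1 s}$ for $s\geq 1$ with $C_1$ independent of $x$. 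By (1), $F_x(s)\to 0$ as $x\to z$ for each fixed $s$, while $e^{-\lambda s}F_x(s)$ is dominated on $[t,\infty)$ by the $x$-independent $L^1$ function $e^{-\lambda s}\min\{1,C_1e^{\lambda_1 s}\}$; a further application of dominated convergence, together with (1) applied to $e^{-\lambda t}F_x(t)$, yields $\int_{\{\tau\geq t\}}e^{-\lambda\tau}\,d\mathbb{P}_x\to 0$.

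I expect the main obstacle to be the boundary-regularity content of (1): one must know that Brownian motion started near $z$ exits $G$ quickly. This is inexpensive here, because $\partial G$ is smooth and the construction of $h_G$ following~(\ref{eq:dirichletg}) already builds in continuity up to $\overline{G}$ and the vanishing Dirichlet condition, while Lemma~\ref{lemma:heateastimate4} supplies the integrable majorant. The only step calling for a little care is the appeal to the symmetry of $h_G$ (equivalently, to continuity of $h_G(t,\cdot,y)$ in its first argument up to $\overline{G}$), which is classical; see \cite{chavel1984eigenvalues}. One can also avoid symmetry altogether by invoking the standard solvability of the Dirichlet heat problem on a smooth domain: $x\mapsto\int_G h_G(t,x,y)\,dy$ solves the heat equation on $G$ with continuous boundary value $0$, since every point of $\partial G$ is regular.
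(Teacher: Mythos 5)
Your proposal is correct and follows essentially the same route as the paper: part (1) via the vanishing of the Dirichlet heat kernel at $\partial G$ together with an integrable majorant (the paper uses uniform convergence from joint continuity on $\overline{G}\times\overline{G}$ where you use dominated convergence), and part (2) via the same reduction to real $\lambda\in(\lambda_1,0)$, Stieltjes integration by parts, the bound $F_x(s)\leq C_1e^{\lambda_1 s}$, and dominated convergence. Your explicit appeal to the symmetry of $h_G$ (or to solvability of the Dirichlet problem) to justify that $h_G(t,x,y)\to 0$ as the \emph{first} argument tends to $\partial G$ is a detail the paper glosses over, and is worth having.
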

\begin{proof}
For a fixed $t>0$ the function  $h_G(t,\cdot,\cdot)$ is a continuous on $\overline{G}\times\overline{G}$ and vanishes for one of the values in $\partial G$. Hence, $h_G(t,x,y)\to 0$ uniformly in $y$ as $x\to z\in\partial G$. Therefore we have  for $G\ni x\to z\in\partial G$
\begin{align*}
\mathbb{P}_x(\tau\geq t)=\int_G h_G(t,x,y)\,dy\to 0.
\end{align*}
This proves the first assertion.
For the second we can assume without loss of generality $\lambda$ to be real and
 $\lambda_1<\lambda<0$.
For $x\in G$ and $s>0$ define the function 
\begin{align*}
F_x(s):=\mathbb{P}_x(\tau\geq s)=\int_G h_G(s,x,y)\,dy.
\end{align*}
Then, as in the proof of Proposition \ref{prop:measurheat} we have $F_x(s)\leq C_1e^{\lambda_1 s}$ for some constant $C_1>0$ independent of $x$. Now we can employ integration by parts:
\begin{align*}
\int_{\{\tau\geq t\}} e^{-\lambda \tau}\,d\mathbb{P}_x&=-\int_t^{\infty} e^{-\lambda s}\,dF_x(s)\\
&=e^{-\lambda t}\mathbb{P}_x(\tau\geq t)+(-\lambda)\int_t^{\infty}F_x(s)\,ds\\
&=e^{-\lambda t}\mathbb{P}_x(\tau\geq t)+(-\lambda)\int_G\int_t^{\infty}e^{-\lambda s}h_G(s,x,y)\,ds\,dy.
\end{align*}
Now $e^{-\lambda t}\mathbb{P}_x(\tau\geq t)\to 0$ as $x\to z\in \partial G$ by the first assertion. 

To obtain an estimate on the second term we use the large-scale estimate from Lemma \ref{lemma:heateastimate4} to find a constant $C_2>0$ such that $h(s,x,y)\leq C_2e^{\lambda_1 s}$. This yields, since $\lambda>\lambda_1$ and by the second part of Corollary \ref{folg:dirichlet1}
\begin{align*}
\int_G\int_t^{\infty}e^{-\lambda s}h_G(s,x,y)\,ds\,dy&\leq \int_G\int_t^{\infty}e^{-\lambda s}h_s(x,y)\,ds\,dy\\
&\leq C_2 \int_G\int_t^{\infty}e^{-\lambda s}e^{\lambda_1 s} \,ds\,dy\\
&<\infty.
\end{align*}
Hence, dominant convergence applies and we get for $x\to z\in\partial G$:
\begin{align*}
\int_G\int_t^{\infty}e^{-\lambda s}h_G(s,x,y)\,ds\,dy\to 0.
\end{align*}
\end{proof}
For $x\in X$ and $\delta>0$ we define the exit time $\tau_{x,\delta}:C_X\to [0,\infty]$ of the Brownian motion from the open ball $B(x,\delta)$ by $\tau_{x,\delta}(\gamma):=\inf\{t>0\mid \gamma(t)\in X\setminus B(x,\delta)\}$.
\begin{lemma}\label{lemma:thmexit2}
For any $\delta>0$
\begin{align*}
\mathbb{P}_x(\tau_{x,\delta}<t)\to 0
\end{align*}
as $t\to 0$ uniformly in $x$ on compact sets. 
\end{lemma}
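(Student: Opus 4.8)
The plan is to reduce the statement about the exit time $\tau_{x,\delta}$ from the ball $B(x,\delta)$ to an estimate on the heat kernel mass escaping a ball, which is exactly what Lemma \ref{expdecay} provides, uniformly in the center because $X$ is harmonic of purely exponential volume growth. First I would observe that, since $B(x,\delta)$ is a pre-compact domain, formula (\ref{eq:dirichletwsk}) applies: for the Dirichlet heat kernel $h_{B(x,\delta)}$ on $B(x,\delta)$ we have $\mathbb{P}_x(\tau_{x,\delta}\geq t)=\int_{B(x,\delta)}h_{B(x,\delta)}(t,x,y)\,dy$, hence
\begin{align*}
\mathbb{P}_x(\tau_{x,\delta}<t)=1-\int_{B(x,\delta)}h_{B(x,\delta)}(t,x,y)\,dy.
\end{align*}
Next, exactly as in the proof of Lemma \ref{lemma:heatball}, I would split this via (\ref{eq:dirichletg}) into the contribution of the free heat kernel and of the correction term $g$ associated to the domain $B(x,\delta)$:
\begin{align*}
\mathbb{P}_x(\tau_{x,\delta}<t)=1-\int_{B(x,\delta)}h_t(x,y)\,dy-\int_{B(x,\delta)}g(t,x,y)\,dy\leq\int_{X\setminus B(x,\delta)}h_t(x,y)\,dy-\int_{B(x,\delta)}g(t,x,y)\,dy.
\end{align*}

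The first term is controlled uniformly in $x$ by Lemma \ref{expdecay}: for $t$ small enough that $\delta\geq D2\rho t$, it is bounded by $\kappa e^{-\eta\delta}$ with constants depending only on $t$, and in fact (chasing the proof of Lemma \ref{expdecay}) by something of the form $C_0\sqrt{t}/\min\{1,t^{n/2}\}\,e^{-\delta^2/(4Dt)}$, which tends to $0$ as $t\to0$ independently of $x$. For the second term I would invoke the version of Lemma \ref{lemma:boundg} applied to the family of domains $B(x,\delta)$ with the fixed compact core $\{x\}$: the parabolic maximum principle (Proposition \ref{prop:parabolic}) gives $\sup_{y\in\overline{B(x,\delta)}}(-g(t,x,y))\leq\sup_{0<s\leq t,\,y\in\partial B(x,\delta)}h_s(x,y)$, and since every boundary point $y\in\partial B(x,\delta)$ satisfies $d(x,y)=\delta$, Lemma \ref{lemma:heateastimate4} bounds this by $\alpha e^{-\delta^2\beta/t}$ for $0<t<1$ with $\alpha,\beta$ absolute. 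Multiplying by $\operatorname{vol}(B(x,\delta))$, which is the same for every $x$ by harmonicity, gives a bound $\alpha\operatorname{vol}(B(\,\cdot\,,\delta))e^{-\delta^2\beta/t}\to0$ as $t\to0$, again independently of $x$. Combining the two bounds yields $\mathbb{P}_x(\tau_{x,\delta}<t)\leq\gamma_\delta e^{-\gamma_\delta^2\beta''/t}$ with constants depending only on $\delta$ and $n$, which converges to $0$ uniformly in $x$, and in particular uniformly on compact sets.

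The only real subtlety — the "main obstacle" — is making sure the constants in Lemma \ref{lemma:heateastimate4}, Lemma \ref{expdecay}, and Lemma \ref{lemma:boundg} genuinely do not depend on the center $x$. This is precisely where the hypotheses enter: harmonicity makes $\operatorname{vol}(B(x,\delta))$ and the density $A(r)$ independent of $x$, the rank one / purely exponential volume growth assumption gives the uniform bound $A(r)\leq ce^{2\rho r}$ used in Lemma \ref{expdecay}, and the Ricci lower bound (automatic since harmonic manifolds are Einstein) makes the Poincaré inequality (\ref{eq:Poincare inequality}), and hence the heat kernel estimates of Lemma \ref{lemma:heateastimate4}, hold with center-independent constants. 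Once this uniformity is in hand the argument is just a repetition of the proof of Lemma \ref{lemma:heatball} with $G$ replaced by the moving ball $B(x,\delta)$, so I would state it briefly rather than reproduce every line.
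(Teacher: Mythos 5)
Your proposal is correct and follows essentially the same route as the paper's own proof: the identity $\mathbb{P}_x(\tau_{x,\delta}<t)=1-\int_B h_B(t,x,y)\,dy$, the split via (\ref{eq:dirichletg}) into $\int_{X\setminus B}h_t$ controlled by Lemma \ref{expdecay} and $-\int_B g_B$ controlled by Lemma \ref{lemma:boundg} with $K=\{x\}$, and uniformity in $x$ from harmonicity making $\vol(B(x,\delta))$ and all constants center-independent. Your closing remarks on where the uniformity of the constants comes from are exactly the point the paper relies on.
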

\begin{proof}
For given $x\in X$ and $\delta>0$. Let $B=B(x,\delta)$ denote the open ball of radius $\delta$ around $x$ and $h_B$ the Dirichlet heat kernel of $B$. Let $g_B$ be the corresponding function from equation (\ref{eq:dirichletg}). Hence Lemma \ref{lemma:boundg} applies to $K=\{x\}$ and we have constants $\alpha,\beta$ independent of $x$ such that 
\begin{align*}
 \sup_{y\in \overline{B}}(-g_B(t,x,y))\leq \alpha e^{-\delta^2\beta/t}.
 \end{align*}
 With Lemma \ref{expdecay} this yields for $0<t<\delta/(D2\rho):$ 
 \begin{align*}
 \mathbb{P}_x(\tau_{x,\delta}<t)&=1-\int_B h_B(t,x,y)\,dy\\
&=1-\int_B h_t(x,y)\,dy-\int_B g_B(t,x,y)\,dy\\
&=\int_{X\setminus B}h_t(x,y)\,dy-\int_B g_B(t,x,y)\,dy\\
&\leq \kappa e^{-\delta\eta/t}+\vol(B)\cdot\alpha e^{-\delta^2\beta/t},
\end{align*}
where we again use the constants obtained in the previous lemmas. 
Since $X$ is harmonic, the volume of $B$ only depends on $\delta$. Hence $\kappa e^{\delta\eta/t}+\vol(B)\cdot\alpha e^{-\delta^2\beta/t}\to 0$ as $t\to 0$ uniformly in $x$. 
\end{proof}
\begin{prop}\label{prop:exitcont}
Let $\phi:\partial G\to \C$ be a continuous function, $\operatorname{Re}\lambda>\lambda_1$ 
 and $\psi(x):=\mathbb{E}_x(e^{-\lambda\tau}\phi(B_{\tau}))$.
 Then $\psi(x)\to\phi(z)$ as $G\ni x\to z\in \partial G$.
 \end{prop}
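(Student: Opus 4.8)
The plan is to compare $\psi(x)$ directly with $\phi(z)$, splitting the path space according to whether the Brownian motion exits $G$ quickly and close to $z$ — which is what happens with overwhelming probability once $x$ is near $z$. First, since $\mathbb{P}_x$ is a probability measure, one decomposes
\begin{align*}
\psi(x)-\phi(z)=\mathbb{E}_x\bigl(e^{-\lambda\tau}(\phi(B_\tau)-\phi(z))\bigr)+\phi(z)\,\mathbb{E}_x\bigl(e^{-\lambda\tau}-1\bigr),
\end{align*}
and it then suffices to show that both terms tend to $0$ as $G\ni x\to z$. For the second term I would fix $t>0$ and split $C_X$ into $\{\tau<t\}$ and $\{\tau\ge t\}$: on $\{\tau<t\}$ one has $|e^{-\lambda\tau}-1|\le|\lambda|\,t\,e^{|\lambda|t}$, which is $o(1)$ as $t\to0$ uniformly in $x$, while on $\{\tau\ge t\}$ one bounds $|e^{-\lambda\tau}-1|\le|e^{-\lambda\tau}|+1$ and invokes both parts of Lemma \ref{lemma:thmexit1} to get $\int_{\{\tau\ge t\}}(|e^{-\lambda\tau}|+1)\,d\mathbb{P}_x\to0$ as $x\to z$ for fixed $t$; letting first $t\to0$ and then $x\to z$ kills this term.

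For the first term I would fix $\epsilon>0$, use continuity of $\phi$ at $z$ to pick $r>0$ with $|\phi(y)-\phi(z)|<\epsilon$ for $y\in\partial G$, $d(y,z)<r$, set $\delta=r/4$, and restrict to $x\in G$ with $d(x,z)\le\delta$ (a relatively compact set). With $\tau_{x,\delta}$ the exit time from $B(x,\delta)$, decompose the full-measure set $\{\tau<\infty\}$ into
\begin{align*}
\{\tau\ge t\},\qquad \{\tau<t\}\cap\{\tau<\tau_{x,\delta}\},\qquad \{\tau<t\}\cap\{\tau\ge\tau_{x,\delta}\}.
\end{align*}
On the first event the contribution is at most $2\lVert\phi\rVert_\infty\int_{\{\tau\ge t\}}|e^{-\lambda\tau}|\,d\mathbb{P}_x$, which tends to $0$ as $x\to z$ for fixed $t$ by Lemma \ref{lemma:thmexit1}. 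On the second event, continuity of the sample path and $\mathbb{P}_x(B_\tau\in\partial G)=1$ force $B_\tau\in B(x,\delta)\cap\partial G$, so $d(B_\tau,z)<2\delta<r$ and $|\phi(B_\tau)-\phi(z)|<\epsilon$; since $|e^{-\lambda\tau}|\le e^{|\lambda|t}$ there, this contribution is at most $\epsilon\,e^{|\lambda|t}$. The third event lies inside $\{\tau_{x,\delta}<t\}$, so its contribution is at most $2\lVert\phi\rVert_\infty e^{|\lambda|t}\mathbb{P}_x(\tau_{x,\delta}<t)$, which by Lemma \ref{lemma:thmexit2} is $o(1)$ as $t\to0$ uniformly for $x$ in the chosen compact set.

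Assembling these, the order of quantifiers will be: given $\epsilon$, fix $\delta$; then choose $t$ small enough that $e^{|\lambda|t}\le2$ and the $\{\tau_{x,\delta}<t\}$-term is $\le\epsilon$ uniformly in such $x$; then, with $t$ frozen, let $x\to z$ so that the $\{\tau\ge t\}$-terms and the $\mathbb{E}_x(e^{-\lambda\tau}-1)$-term vanish. This yields $\limsup_{G\ni x\to z}|\psi(x)-\phi(z)|\le C\epsilon$ with $C$ depending only on $\lVert\phi\rVert_\infty$, and letting $\epsilon\to0$ finishes the proof. The main obstacle is exactly this interlocking of the parameters: the $\{\tau\ge t\}$-contributions only become negligible with $t$ held fixed and $x\to z$ (Lemma \ref{lemma:thmexit1}), whereas the $\{\tau_{x,\delta}<t\}$-contribution only becomes negligible as $t\to0$ (Lemma \ref{lemma:thmexit2}), so the nesting $\epsilon\mapsto\delta\mapsto t\mapsto x$ is forced; a minor additional care is to express all bounds through $|e^{-\lambda\tau}|=e^{-(\operatorname{Re}\lambda)\tau}$ and to reduce to real $\lambda_1<\lambda<0$ precisely where Lemma \ref{lemma:thmexit1} does.
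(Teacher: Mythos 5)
Your proof is correct and follows essentially the same route as the paper: the same three-way splitting of the path space by exit time and exit location, controlled by Lemmas \ref{lemma:thmexit1} and \ref{lemma:thmexit2}, with the same nesting of parameters $\epsilon\mapsto\delta\mapsto t\mapsto x$. The only cosmetic differences are that you centre the auxiliary ball at $x$ rather than at $z$ (so you bypass the paper's intermediate step $\mathbb{P}_x(\tau_{x,\delta/2}\le\tau_{z,\delta})=1$) and that you split off the term $\phi(z)\,\mathbb{E}_x(e^{-\lambda\tau}-1)$ at the outset instead of absorbing it into the short-time estimate.
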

 \begin{proof}
 Let $z\in \partial G$ and fix $\epsilon>0$. We choose $\delta>0$ such that 
 \begin{align*}
 \lvert \phi(y)-\phi(z)\rvert\leq \epsilon\quad \forall y\in \partial G\cap \overline{B(z,\delta)}.
 \end{align*}
 Furthermore, we fix a constant $M>0$ such that $M>\lVert \phi\rVert_{\infty}$. 
 Let $t>0$ and $\tau_{z,\delta}$ be the exit time from the open ball $B(z,\delta)$. Then with notation analogous to the previous notation (see (\ref{no:pathspace})):
 \begin{align*}
 C_X=\{\tau<t,\tau<\tau_{z,\delta}\} \dot{\cup} \{\tau <t,\tau\geq \tau_{z,\delta}\}\dot{\cup}\{\tau\geq t\}.
 \end{align*}
 Hence we obtain:
 \begin{align*}
 \lvert \psi(x)-\phi(z)\rvert&\leq\int_{C_X}\lvert e^{-\lambda\tau}\phi(B_{\tau})-\phi(z)\rvert\,d\mathbb{P}_x\\
& \leq C(x,t)+D(x,t)+E(x,t),
 \end{align*}
 where:
 \begin{align*}
 C(x,t)&=\int_{\{\tau<t,\tau<\tau_{z,\delta}\} }\lvert e^{-\lambda\tau}\phi(B_{\tau})-\phi(z)\rvert\,d\mathbb{P}_x,\\
 D(x,t)&=\int_{ \{\tau <t,\tau\geq \tau_{z,\delta}\} }\lvert e^{-\lambda\tau}\phi(B_{\tau})-\phi(z)\rvert\,d\mathbb{P}_x\\ 
 \text{and}\\
  E(x,t)&=\int_{\{\tau\geq t\}}\lvert e^{-\lambda\tau}\phi(B_{\tau})-\phi(z)\rvert\,d\mathbb{P}_x.
 \end{align*}
 For the estimate on $C(x,t)$ we 
 observe that on $\{ \tau<t,\tau<\tau_{z,\delta}\}$ 
 \begin{align*}
 \mathbb{P}_x(B_{\tau}\in B(z,\delta)\cap \partial G)=1\quad\forall x\in B(z,\delta)\cap G
 \end{align*}
 and therefore
 \begin{align*}
 \mathbb{P}_{x}(\lvert\phi(B_{\tau})-\phi(z)\rvert\leq \epsilon)=1\quad\forall x\in B(z,\delta)\cap G.
 \end{align*}
 Furthermore, on $\{ \tau<t,\tau<\tau_{z,\delta}\}$ we have for $t>0$ small enough $\lvert e^{-\tau \lambda}-1\rvert\leq 2\lvert\lambda\rvert t$, since $\tau<t$. 
 Hence for $x\in B(z,\delta)\cap G$, we get:
 \begin{align*}
 C(x,t)&\leq \int_{\{ \tau<t,\tau<\tau_{z,\delta}\}}\lvert e^{\lambda \tau}\rvert \cdot\lvert\phi(B_{\tau})-\phi(z)\rvert+\lvert e^{-\lambda\tau}-1\rvert\cdot\lvert \phi(z)\rvert \,d\mathbb{P}_{x}\\
&\leq e^{-\lambda_{1}t}\cdot\epsilon+2\lvert\lambda\rvert t\cdot M\\
&\leq C\cdot \epsilon
\end{align*}
 for all $t\in(0,t_{1}]$, where $t_{1}$ and $C$ are positive constants  independent of $x$.\\
 For the estimate on $D(x,t)$, first note that for $x\in B(z,\delta/2)\cap G$ we have $B(x,\delta/2)\subset B(z,\delta)$. Therefore a path starting at $x$ must first exit $B(x,\delta)$ before it exits $B(z,\delta)$.
 Hence
 \begin{align*}
\mathbb{P}_{x}(\tau_{x,\delta/2}\leq \tau_{z,\delta})=1.
\end{align*}
 This gives us for $x\in B(z,\delta/2):$
\begin{align}\label{eq:P}
\mathbb{P}_{x}(\tau\geq\tau_{z,\delta})&=\mathbb{P}_{x}(\tau\geq\tau_{z,\delta},\tau<t)+\mathbb{P}_{x}(\tau\geq\tau_{z,\delta},\tau\geq t)\\
&\leq \mathbb{P}_{x}(\tau_{x,\delta/2}<t)+\mathbb{P}_{x}(\tau\geq t).
\end{align}
Now from Lemma \ref{lemma:thmexit2} it follows that there is a $t_{2}$ such that for all $t\in (0,t_{2}]$ we have
\begin{align*}
\mathbb{P}_{x}(\tau_{x,\delta/2}<t)\leq \epsilon\quad \forall x\in B(z,\delta/2)\cap G.
\end{align*}
Where we choose $t_{2}$ such that $t_{2}\leq t_{1}$ and $e^{\lvert\lambda\rvert t_{2}}\leq 2$.
Fix this choice of $t_2$, then from Lemma \ref{lemma:thmexit1} we can choose $\delta_{1}\in(0,\delta/2)$ such that:
\begin{align*}
\mathbb{P}_{x}(\tau\geq t_{2})\leq \epsilon\quad\forall B(z,\delta_{1})\cap G.
\end{align*}
With this choice of $\delta_{1}$ and $t_{2}$ together with equation (\ref{eq:P}) we get for $x\in B(z,\delta_{1})\cap G$ and $t\in (0,t_{2})$:
\begin{align*}
D(x,t_{2})&\leq\int_{ \{\tau <t_2,\tau\geq \tau_{z,\delta}\} }\lvert e^{-\lambda\tau}\phi(B_{\tau})-\phi(z)\rvert\,d\mathbb{P}_x\\ 
&\leq (2\cdot M+M)\mathbb{P}_{x}(\tau\geq\tau_{z,\delta})\\
&\leq 3M(\epsilon+\epsilon).
\end{align*}
For the estimate of $E(x,t)$ we observe that, due to Lemma \ref{lemma:thmexit1} there is a $\delta_{2}\in(0,\delta_{1})$ such that 
\begin{align*}
\int_{\{\tau\geq t_{2}\}} \lvert e^{-\lambda\tau} \rvert \,d\mathbb{P}_{x}<\frac{\epsilon}{M}\quad\forall x\in B(z,\delta_{2})\cap G
\end{align*}
and 
\begin{align*}
\mathbb{P}_{x}(\tau\geq t_{2})\leq \frac{\epsilon}{M}\quad\forall x\in B(z,\delta_{2})\cap G.
\end{align*}
Therefore we get for $B(z,\delta_{2})\cap G$ 
\begin{align*}
E(x,t_{2})&\leq \int_{\{\tau\geq t_{2}\}}\lvert e^{-\lambda \tau}\phi(B_{\tau})\rvert+\lvert \phi(z)\rvert\,d\mathbb{P}_{x}\\
&\leq M\cdot  \int_{\{\tau\geq t_{2}\}}\lvert e^{-\lambda \tau}\rvert +M\cdot \mathbb{P}_{x}(\tau\geq t_{2})\\
&\leq 2\epsilon.
\end{align*}
Now we can conclude the proof by keeping our choices of $t_{2},\delta_{1}$ and $\delta_{2}$ and combine the estimates above to
\begin{align*}
\lvert \psi(x)-\phi(z)\rvert&\leq C(x,t_{2})+D(x,t_{2})+E(x,t_{2})\\
&\leq (C+6M+2)\cdot\epsilon.
\end{align*}
Therefor $\psi(x)\to \phi(z)$ as $x\to z$ for all $x\in B(z,\delta_{2})\cap G$.
 \end{proof}
 This finishes the proof of Theorem \ref{thm:exitbrown}.

\footnotesize
%
\bibliography{literature}
\bibliographystyle{alpha} 






\end{document}